\newtheorem{theorem2}{Theorem}
\newtheorem{proposition2}[theorem2]{Proposition}
\newtheorem{lemma2}[theorem2]{Lemma}
\newtheorem{definition2}[theorem2]{Definition}
\newtheorem{corollary2}[theorem2]{Corollary}
\newtheorem{remark2}[theorem2]{Remark}
\newtheorem{fact}[theorem2]{Fact}
\newtheorem{convention}[theorem2]{Convention}
\spnewtheorem*{theorem*}{Theorem}{\bf}{\it}
\spnewtheorem*{lemma*}{Lemma}{\bf}{\it}
\spnewtheorem*{yamabe}{Yamabe Problem}{\bf}{\it}
\numberwithin{theorem2}{section}
\newcommand{\Z}{\mathbb{Z}}
\newcommand{\LobL}{\mathbb{L}}
\newcommand{\N}{\mathbb{N}}
\newcommand{\R}{\mathbb{R}}
\newcommand{\E}{\mathbb{E}}
\newcommand{\F}{\mathbb{F}}
\newcommand{\pr}[2] {\frac{\partial #1}{\partial #2}}
\newcommand{\abs}[1] {\vert #1\vert}
\newcommand{\1}{\mathbb{I}}
\definecolor{redg}{RGB}{204,0,0}
\journalname{Discrete \& Computational Geometry}
\begin{document}
\title{Discrete Yamabe problem for polyhedral surfaces\thanks{This research was supported by DFG SFB/Transregio~109 ``Discretization in Geometry and Dynamics" and the Wittgenstein Prize, Austrian Science Fund (FWF), grant no. Z 342-N31.}
}

\author{Hana Dal Poz Kou\v{r}imsk\'{a}       
}
\institute{H. Dal Poz Kou\v{r}imsk\'{a} \at
              Institute of Science and Technology Austria,\\
              Am Campus 1,
              3400 Klosterneuburg,
              Austria\\
              \email{hana.kourimska@ist.ac.at}           
}

\date{Received: date / Accepted: date}

\maketitle

\begin{abstract}
We study a new discretization of the Gaussian curvature for polyhedral surfaces. This discrete Gaussian curvature is defined on each conical singularity of a polyhedral surface as the quotient of the angle defect and the area of the Voronoi cell corresponding to the singularity.
\\
We divide polyhedral surfaces into discrete conformal classes using a generalization of discrete conformal equivalence pioneered by Feng Luo. We subsequently show that, in every discrete conformal class, there exists a polyhedral surface with constant discrete Gaussian curvature. We also provide explicit examples to demonstrate that this surface is in general not unique.
\keywords{Delaunay triangulation \and Discrete Gaussian curvature \and Discrete conformal equivalence \and Hyperbolic geometry \and Piecewise linear metric}
 \subclass{57M50 \and 52B10\and 52C26}
\end{abstract}

\section{Introduction}
The Yamabe problem asks if every closed Riemannian manifold is conformally equivalent to one with constant scalar curvature. More precisely:
\begin{yamabe}
	Let $g$ be a Riemannian metric on a closed smooth manifold~$M$. Does there exist a smooth function~$u$ on~$M$ such that the Riemannian metric~$e^{2u} g$ has constant scalar curvature?
\end{yamabe} 
For two-dimensional manifolds the scalar and the Gaussian curvature are equivalent, and thus the Yamabe problem is answered by the celebrated \emph{Poincar\'{e}-Koebe uniformization theorem}, which states that any closed oriented Riemannian surface is conformally equivalent to one with constant Gaussian curvature.
\\\\
The purpose of this article is to translate the Yamabe problem for two-dimensional manifolds into the setting of polyhedral surfaces. The essential ingredient of the translation is the introduction of a new discretization of Gaussian curvature.
\\\\
Defining the discrete Gaussian curvature requires a little preparation. We characterize a~\emph{polyhedral} or a \emph{piecewise flat surface} by a triple~$(S,V,d)$, where $S$ is the underlying topological surface, $d$ denotes the \emph{PL-metric} (PL stands for piecewise linear), and $V\subseteq S$ is a finite set containing the conical singularities of~$d$. 
\\\\
Let $\alpha_i$ denote the cone angle at a point~$i\in V$. The \emph{angle defect},
\begin{align*}
	W:V\to \R, \qquad W_i:=2\pi - \alpha_i,
\end{align*}
evaluates for each $i\in V$ how far the piecewise flat surface is from being flat at a neighborhood of~$i$. This notion, introduced by Tullio Regge~\cite{regge}, is best understood as the discretization of the Gaussian curvature two-form.
\\\\
The \emph{Voronoi cell} of a point $i\in V$ consists of all points on the piecewise flat surface~$(S,V,d)$ that are as close or closer to $i$ than to any other point in $V$. It arises as a natural neighborhood of the point~$i$.
\begin{definition2}\label{def:disc_Gauss_curvature} 
	The \textbf{discrete Gaussian curvature} at a point~$i\in V$ is the quotient of the angle defect~$W_i$ and the area~$A_i$ of the Voronoi cell of~$i$:
	\begin{align*}
	K:V\to\R,\qquad i\mapsto K_i:=\frac{W_i}{A_i}.
	\end{align*}
\end{definition2}

The discrete Gaussian curvature shares the following characteristic properties with the smooth Gaussian curvature: it is defined intrinsically, it satisfies the Gauss-Bonnet formula, and it scales by a factor of~$\frac{1}{r^2}$ upon a global rescaling of the metric by factor~$r$. The latter characteristic is perhaps of the biggest contribution, since the formula most commonly used for discrete Gaussian curvature --- the angle defect --- is scaling invariant.
\\\\
\emph{Discrete Yamabe problem} asks if for every PL-metric there exists a discrete conformally equivalent one with constant discrete Gaussian curvature. It can be answered affirmatively by the following theorem.
\begin{theorem2}[Discrete uniformization theorem]\label{thm:main}
	For every PL-metric~$d$ on a marked surface~$(S,V)$, there exists a discrete conformally equivalent PL-metric~$\tilde{d}$ such that the piecewise flat surface~$(S,V,\tilde{d})$ has constant discrete Gaussian curvature. 
\end{theorem2}
The proof of Theorem~\ref{thm:main} presented here is variational in nature. We translate the problem into a non-convex optimization problem with inequality constraints, which we solve using a classical theorem from calculus.
\\\\
The PL-metric~$\tilde{d}$ of constant curvature from Theorem~\ref{thm:main} is, in general, not unique.
\\\\
\emph{Discrete conformal equivalence} for piecewise flat surfaces with a fixed triangulation was introduced by Martin Ro\v{c}ek and Ruth Williams \cite{RocekWilliams}, and Feng Luo~\cite{luo}, and is a straightforward discretization of the conformal equivalence on smooth surfaces. Recall that two Riemannian metrics~$g$ and~$\tilde{g}$ on a surface~$S$ are conformally equivalent if there exists a smooth function~$u$ on~$S$ such that
$$\tilde{g} =e^{2u}g.$$
To discretize conformal equivalence, triangulate the piecewise flat surface~$(S,V,d)$ such that~$V$ is the set of vertices and every edge~$e\in E$ is a geodesic. The metric~$d$ is then uniquely determined by the edge lengths
$$\ell: E\to \R_{> 0}, \qquad \ell_{ij} = d(i,j).$$
Two PL-metrics on a surface with a fixed triangulation are \emph{discrete conformally equivalent} if their edge lengths~$\ell,\tilde{\ell}:E\to\R_{>0}$ are related by a factor~$u:V\to\R$:
$$\tilde{\ell}_{ij} = \exp \left(\frac{u_i + u_j}{2}\right)\ell_{ij}.$$
We work with a generalization of discrete conformal equivalence to piecewise flat surfaces (Definition~\ref{def:conformal_equivalence}) introduced by Alexander Bobenko, Ulrich Pinkall and Boris Springborn in~\cite[Definition 5.1.4]{bosa}. This generalization 
reveals that hyperbolic geometry is the right setting for problems involving discrete conformal equivalence. The essential relation between piecewise flat surfaces and its hyperbolic equivalent -- decorated hyperbolic surfaces with cusps -- has been explored and described in detail by Boris Springborn in~\cite{bo}.
\\\\
Another formulation of the discrete Yamabe problem for polyhedral surfaces due to Feng Luo~\cite{luo} asks for the existence of PL-metrics with a constant \emph{angle defect} within a discrete conformal class. It was solved affirmatively by Gu et al.~\cite{guluo1}, as well as by Boris Springborn~\cite{bo}. For surfaces of genus one, Luo's and our formulation of the discrete Yamabe problem are indeed equivalent. However, we believe that Luo's formulation is not a suitable discretization of the smooth Yamabe problem in general, since the angle defect is not a proper discretization of the smooth Gaussian curvature. This claim is supported by the discussions by Bobenko et al. in~\cite[Appendix B]{bosa} and by Huabin Ge and Xu Xu in~\cite[Section 1.2]{gexu}.
\\\\
This article is organized as follows. In Section~\ref{sec:preliminaries} we revise the basic concepts and provide a dictionary between piecewise flat surfaces and decorated hyperbolic surfaces with cusps. Section~\ref{sec:counterexamples} is devoted to the discussion of (non)-uniqueness of PL-metrics with constant discrete Gaussian curvature. In Section~\ref{sec:variational_principles} we translate the statement of Theorem~\ref{thm:main} into a non-convex optimization problem with inequality constraints. In Section~\ref{sec:existence} we prove Theorem~\ref{thm:main}.

\section{Fundamental definitions and results}\label{sec:preliminaries}
In this section we explain the correspondence between piecewise flat surfaces and decorated hyperbolic surfaces with cusps. Since the results in this section are well-known, we only refer to the proofs.
\\\\
Throughout the article we work with a closed oriented topological surface~$S$ and a non-empty finite set~$V\subseteq S$ of \emph{marked points}. A \emph{triangulation of the marked surface}~$(S,V)$ is a triangulation of~$S$ with the vertex set equal to~$V$. We denote a triangulation by~$\Delta$ and the \emph{set of edges} and \emph{faces} of~$\Delta$ by~$E_\Delta$ and~$F_\Delta$, respectively.
\\\\
A metric $d$ on $(S,V)$ is called \emph{piecewise linear} or a \emph{PL-metric} if is flat everywhere but on a finite set of points contained in $V$, where it develops conical singularities.
A \emph{geodesic triangulation} of the piecewise flat surface~$(S,V,d)$ is any triangulation of~$(S,V)$ where the edges are geodesics with respect to the metric~$d$.
\subsection{Tessellations of piecewise flat surfaces, discrete metric}
\paragraph{Voronoi tessellation}
Every piecewise flat surface~$(S,V,d)$ possesses a unique \emph{Voronoi tessellation}. For~$p\in S$ let~$d(p,V)$ denote the distance of~$p$ to the set~$V$, and let~$\Gamma_V(p)$ be the set of all geodesics realizing this distance. The open 2-, 1- and 0-cells of the Voronoi tessellation of~$(S,V,d)$ are the connected components of 
$$\{ p\in S\mid \abs{\Gamma_V(p)}=1 \},$$
$$\{ p\in S\mid \abs{\Gamma_V(p)}=2 \},\quad \text{and}\quad\{ p\in S\mid \abs{\Gamma_V(p)}\geq 3 \},$$
respectively. We denote the closure of the open Voronoi 2-cell containing~$i\in V$ by~$V_i$.
\paragraph{Delaunay tessellation and triangulation}
\emph{Delaunay tessellation }of a piecewise flat surface is the dual of the Voronoi tessellation. A \emph{Delaunay triangulation} arises from the Delaunay tessellation by adding edges to triangulate the non-triangular faces. \\\\
Let~$\Delta$ be a geodesic triangulation of a piecewise flat surface~$(S,V,d)$. The edge $ij\in E_\Delta$ is called a \emph{Delaunay edge} if the vertex $l$ of the adjacent triangle~$ijl\in F_\Delta$ is not contained in the interior of the circumcircle of the other adjacent triangle $ijk\in F_\Delta$. 
\begin{proposition2}
	A geodesic triangulation of a piecewise flat surface is Delaunay if and only if each of its edges is Delaunay.
\end{proposition2}
For proof see for example \cite[Proposition 10]{bosa2}.
\paragraph{Discrete metric} Let~$\Delta$ be a triangulation of the marked surface~$(S,V)$. 
\begin{definition2}\label{def:metric}
	A \textbf{discrete metric} on~$(S, V, \Delta)$ is a function $$\ell:E_\Delta\to\R_{>0}, \qquad \ell(ij) = \ell_{ij},$$
	such that for every triangle $ijk\in F_\Delta$, the (sharp) triangle inequalities are satisfied. That is,
	\begin{align*}
	\ell_{ij} + \ell_{jk} > \ell_{ki}, \quad \ell_{jk} + \ell_{ki} > \ell_{ij}, \quad \ell_{ki} + \ell_{ij} > \ell_{jk}.
	\end{align*}
	The logarithm of the discrete metric~$\ell$,
	\begin{align}\label{eq:log_lengths}
	\lambda_{ij} = 2\log\ell_{ij},
	\end{align}
	is called the\textbf{ logarithmic lengths}.
\end{definition2}
\begin{fact}\label{fact:discrete_metrics_induce_PL-metrics}
	Let~$\Delta$ be a geodesic triangulation of the piecewise flat surface~$(S,V,d)$. Then the PL-metric~$d$ induces a discrete metric on~$(S,V,\Delta)$ by measuring the lengths of the edges in~$E_\Delta$.\\\\
	Vice versa, each discrete metric~$\ell$ on a marked triangulated surface~$(S,V,\Delta)$ induces a PL-metric on~$(S,V)$.
\end{fact}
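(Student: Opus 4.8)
The plan is to prove the two directions separately, in both cases reducing everything to the single observation that each face of the triangulation is isometric to a genuine Euclidean triangle.

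For the forward direction I would start from the geodesic triangulation~$\Delta$ of~$(S,V,d)$. Since every edge~$ij\in E_\Delta$ is a geodesic, it has a well-defined finite length, so setting $\ell_{ij}:=d(i,j)$ gives a function $\ell:E_\Delta\to\R_{>0}$. To verify that $\ell$ is a discrete metric in the sense of Definition~\ref{def:metric}, it remains to check the strict triangle inequalities on each face. The key point is that all conical singularities of~$d$ lie in~$V$, which is exactly the vertex set, so the interior of every face $ijk\in F_\Delta$ is free of singularities and admits a flat developing map into the Euclidean plane. Under this development the three geodesic edges map to straight segments, so the face is carried isometrically onto a nondegenerate Euclidean triangle with side lengths $\ell_{ij},\ell_{jk},\ell_{ki}$. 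The strict triangle inequalities for a Euclidean triangle then transfer directly to~$\ell$.

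For the converse I would build the PL-metric by the standard gluing construction. Given a discrete metric~$\ell$, for each combinatorial face $ijk\in F_\Delta$ the strict triangle inequalities guarantee the existence of a Euclidean triangle~$T_{ijk}$ with side lengths $\ell_{ij},\ell_{jk},\ell_{ki}$, unique up to isometry. I would then glue these triangles along shared edges by the orientation-compatible isometry identifying the two copies of each common edge, which exists because the two copies have equal length by construction. The resulting quotient is a metric space homeomorphic to~$S$, and I would check that the induced metric~$d$ is flat off~$V$: the interior of each face is Euclidean, and a point in the interior of an edge has a neighborhood obtained by gluing two Euclidean half-disks into a full flat disk, hence is a smooth flat point. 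Consequently the only possible cone points occur at the vertices in~$V$, where the cone angle is the sum of the incident interior angles, so~$d$ is a PL-metric with singular set contained in~$V$.

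The main obstacle is entirely in the converse: one must confirm that the edge identifications are mutually consistent so that the glued object is a genuine metric space homeomorphic to~$S$, and that no hidden singularity arises along the open edges, so that flatness holds exactly away from~$V$. The forward direction carries only the minor subtlety of arguing that each open face contains no singularity and therefore develops to an honest Euclidean triangle; once that is in place the inequalities are automatic. Since both constructions are standard and mutually inverse up to isometry, I would conclude by noting that they establish the claimed correspondence.
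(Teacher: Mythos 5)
Your proposal is correct and follows essentially the same route as the paper, which treats the forward direction as immediate and justifies the converse by exactly your construction: realize each face as a Euclidean triangle with the prescribed side lengths and glue neighboring triangles isometrically along their common edges, so that flatness can only fail at the vertices in~$V$. One minor caution in your forward direction: the quantity to assign to an edge is the length of that geodesic edge (as the paper's statement says, ``measuring the lengths of the edges''), not necessarily the distance~$d(i,j)$, since a geodesic edge need not be a shortest path between its endpoints; this does not affect your development argument, which correctly identifies the side lengths of the developed Euclidean triangle and hence the strict triangle inequalities.
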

Indeed,~$\ell$ imposes a Euclidean metric on each triangle~$ijk \in F_\Delta$ by transforming it into a Euclidean triangle with edge lengths~$\ell_{ij},\ell_{jk},\ell_{ki}$. The metrics on two neighboring triangles fit isometrically along the common edge. Thus, by gluing each pair of neighboring triangles in~$\Delta$ along their common edge we equip the marked surface with a PL-metric.

\subsection{Hyperbolic metrics, ideal tessellations, and Penner coordinates} 

Consider a marked surface~$(S,V)$ equipped with a complete finite area hyperbolic metric~$d_{hyp}$ with cusps at the marked points.
We \emph{decorate} the surface~$(S,V,d_{hyp})$ with a horocycle~$\mathcal{H}_i$ at each cusp~$i\in V$. Each horocycle is small enough such that, altogether, the horocycles bound disjoint cusp neighborhoods. The set of all horocycles decorating~$(S,V,d_{hyp})$ is denoted by~$\mathcal{H}$.

\paragraph{Ideal Delaunay tessellations and triangulations}
\begin{definition2}
	An \textbf{ideal Delaunay tessellation} of a decorated hyperbolic surface $(S,V,d_{hyp},\mathcal{H})$ is an ideal geodesic cell decomposition of~$(S,V,d_{hyp})$, such that for each face~$f$ of the lift of~$(S,V,d_{hyp})$ to the hyperbolic plane~$H^2$ via an isometry of the universal cover, the following condition is satisfied. There exists a circle that touches all lifted horocycles anchored at the vertices of~$f$ externally and does not meet any other lifted horocycles.\\\\
	An \textbf{ideal Delaunay triangulation} is any refinement of an ideal Delaunay tessellation by decomposing the non-triangular faces into ideal triangles by adding geodesic edges.
\end{definition2}

\begin{theorem2}[{\cite[Theorem~4.3]{bo}}]\label{thm:ex_and_uniq_ideal_Del_decomposition}
	For each decorated hyperbolic surface with at least one cusp, there exists a unique ideal Delaunay tessellation.
\end{theorem2}

Let~$\Delta$ be a geodesic triangulation of a decorated hyperbolic surface~$(S,V,d_{hyp},\mathcal{H})$. An edge~$ij\in E_\Delta$ is called \emph{Delaunay} if the circle touching the horocycles at vertices~$i,j,k$ of one adjacent triangle~$ijk\in F_\Delta$ and the horocycle at vertex~$l$ of the other adjacent triangle~$ijl\in F_\Delta$ are externally disjoint or externally tangent. We illustrate the difference between a Delaunay and a non-Delaunay edge in Figure~\ref{fig:hyperbolic_Delaunay_edge}.
\begin{figure}[h!]
	\centering
	\begin{subfigure}[b]{0.42\textwidth}
		\labellist
		\small\hair 2pt
		\pinlabel {$i$} [ ] at 24 137
		\pinlabel {$k$} [ ] at 106 476
		\pinlabel {$j$} [ ] at 444 458
		\pinlabel {$l$} [ ] at 481 91
		\endlabellist
		\centering
		\includegraphics[width=0.7\textwidth]{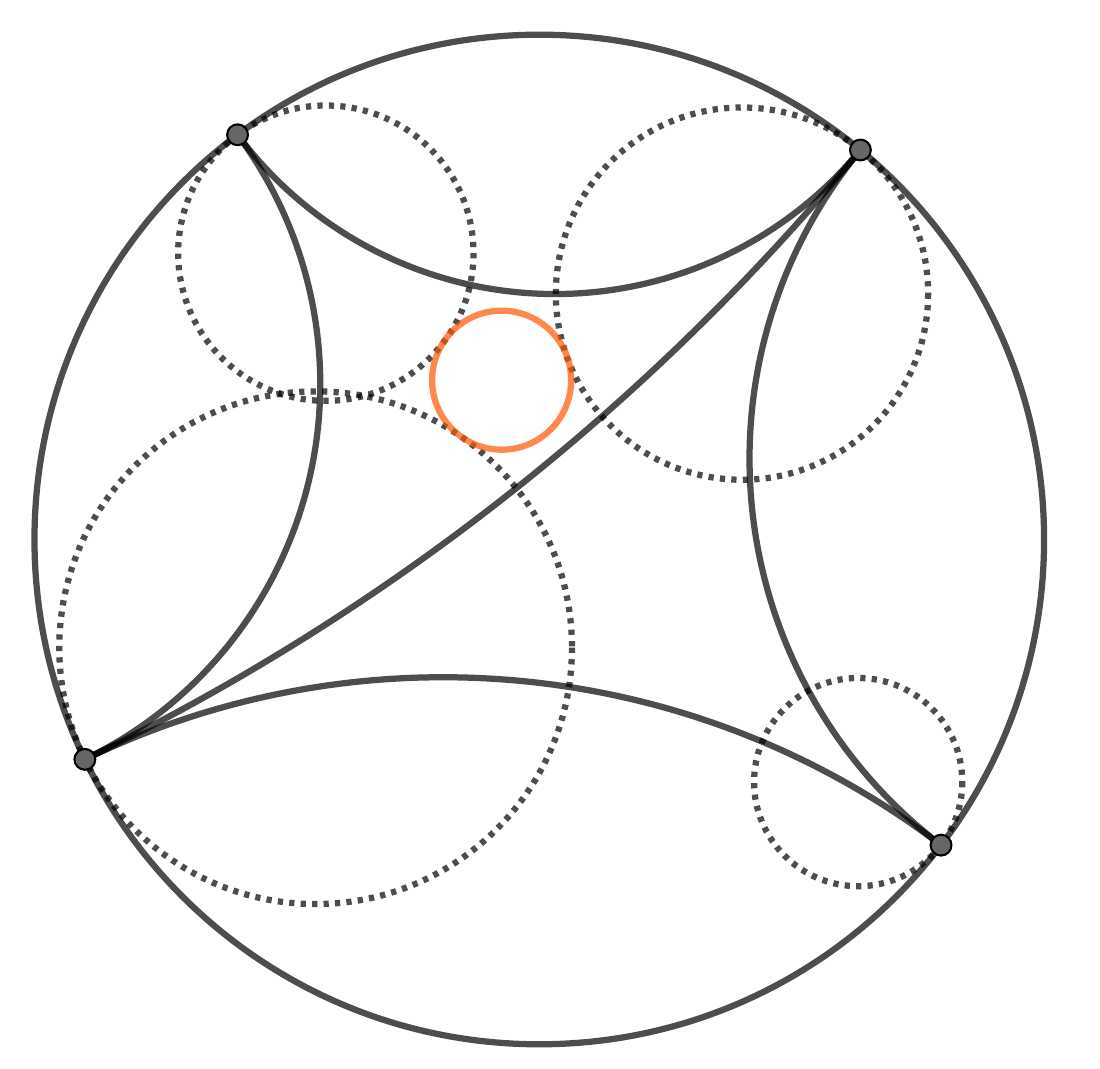}
		\caption{ The orange circle and the horocycle at~$l$ are disjoint, the edge~$ij$ is Delaunay. }
		\label{fig:Delaunay_edge_hyperbolic}
	\end{subfigure}
	\quad
	\begin{subfigure}[b]{0.42\textwidth}
		\labellist
		\small\hair 2pt
		\pinlabel {$i$} [ ] at 24 137
		\pinlabel {$k$} [ ] at 106 476
		\pinlabel {$j$} [ ] at 444 468
		\pinlabel {$l$} [ ] at 491 91
		\endlabellist
		\centering
		\includegraphics[width=0.7\textwidth]{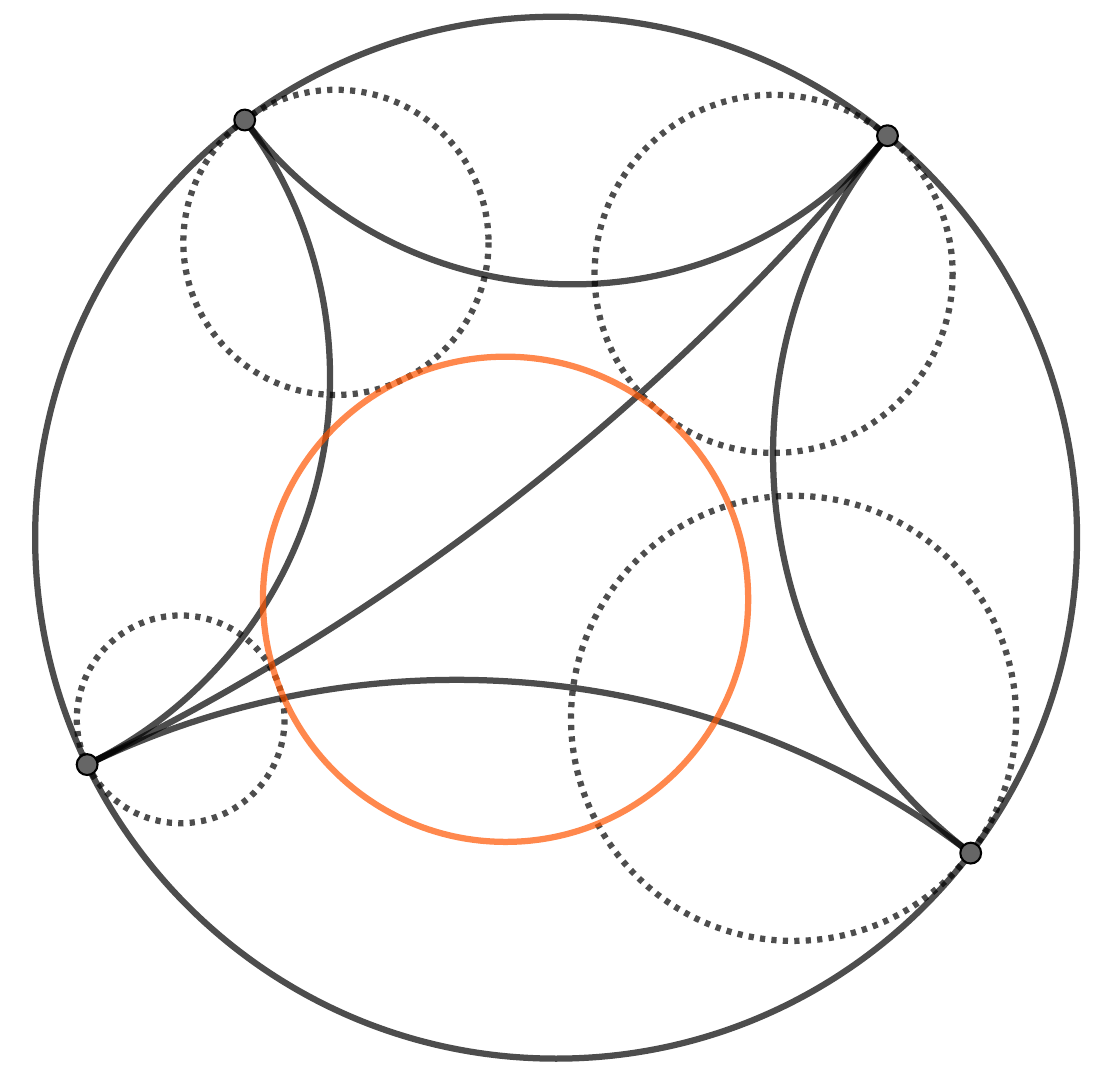}
		\caption{ The orange circle and the horocycle at~$l$ intersect, the edge~$ij$ is not Delaunay.  }
		\label{fig:Delaunay_edge_not_hyperbolic}
	\end{subfigure}
	\caption{ A Delaunay and a non-Delaunay edge. }
	\label{fig:hyperbolic_Delaunay_edge}
\end{figure}

\begin{proposition2}[{\cite[Theorem~4.7]{bo}}]
	An ideal geodesic triangulation of a decorated hyperbolic surface is Delaunay if and only if each of its edges is Delaunay.
\end{proposition2}

\paragraph{Penner coordinates}Penner coordinates, introduced by Robert Penner in \cite{penner}, are the analogue of the discrete metric (see Definition~\ref{def:metric}) for decorated hyperbolic surfaces. 
\begin{definition2}\label{def:signed_horocycle_distance}
	Let~$i$ and~$j$ be two ideal points of the hyperbolic plane. Let~$\mathcal{H}_i$ and~$\mathcal{H}_j$ be two horocycles, anchored at ideal points~$i$ and~$j$, respectively. The \textbf{signed horocycle distance} between~$\mathcal{H}_i$ and~$\mathcal{H}_j$ is the length of the segment of the geodesic line connecting the cusps~$i$ and~$j$, truncated by the horocycles. The length is taken negative if~$\mathcal{H}_i$ and~$\mathcal{H}_j$ intersect.
\end{definition2}
The signed distances between horocycles of a decorated ideal hyperbolic triangle are illustrated in Figure~\ref{fig:penner_coordinates_triangle}. The distance~$\lambda_{ij}$ is negative, whereas the distances~$\lambda_{jk}$ and~$\lambda_{ki}$ are positive.
\begin{figure}[htb]
	\labellist
	\small\hair 2pt
	\pinlabel {$i$} [ ] at 146 488
	\pinlabel {$j$} [ ] at 62 74
	\pinlabel {$k$} [ ] at 518 270
	\pinlabel {$\lambda_{ki}$} [ ] at 349 321
	\pinlabel {$\lambda_{jk}$} [ ] at 328 203
	\pinlabel {$\lambda_{ij}$} [ ] at 209 258
	\endlabellist
	\centering
	\includegraphics[width=0.45\textwidth]{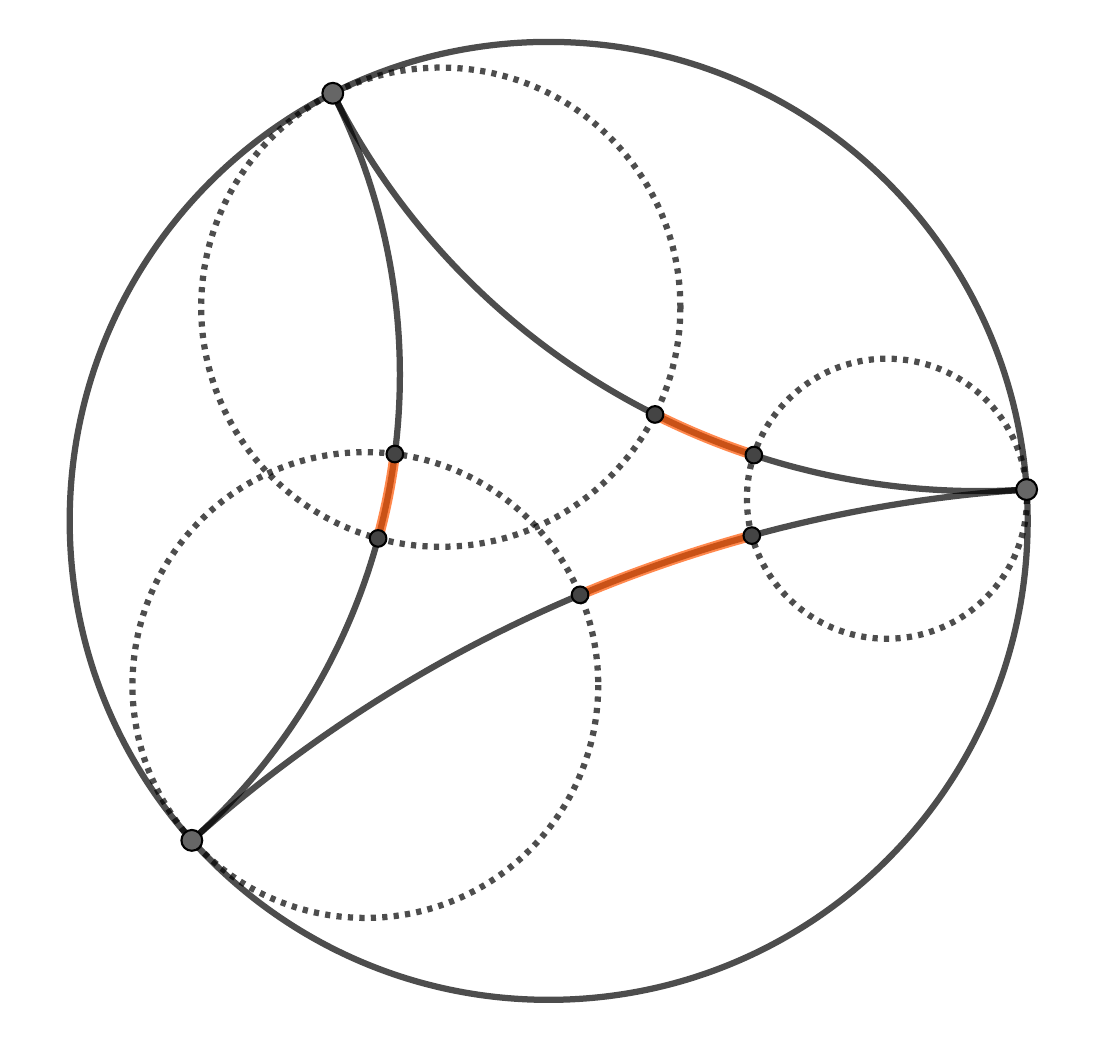}
	\caption{Penner coordinates of a decorated ideal hyperbolic triangle~$ijk$, in the Poincar\'{e} disc model.}
	\label{fig:penner_coordinates_triangle}
\end{figure}

\begin{definition2}\label{def:Penner_coordinates}
	\textbf{Penner coordinates} is a pair consisting of a triangulation~$\Delta$ of~$(S,V)$ and a map
	$$\lambda:E_\Delta\to \R, \qquad ij\mapsto \lambda_{ij}.
	$$
\end{definition2}
\begin{fact}\label{fact:hyperbolic_metric_with_cusps_from_Penner_coords}
	Penner coordinates~$(\Delta,\lambda)$ on a marked surface~$(S,V)$ define a decorated hyperbolic surface~$(S,V,d_{hyp}, \mathcal{H})$, such that the signed distance between the horocycles~$\mathcal{H}_i$ and~$\mathcal{H}_j$, with~$ij\in E_\Delta$, is~$\lambda_{ij}$.
	\\\\
	Vice versa, let~$\Delta$ be a geodesic triangulation of a decorated hyperbolic surface $(S,V,d_{hyp}, \mathcal{H})$. Then~$(S,V,d_{hyp},\mathcal{H})$ induces Penner coordinates~$(\Delta,\lambda)$ by measuring the signed horocycle distance between horocycles~$\mathcal{H}_i$ and~$\mathcal{H}_j$ for each~$ij\in E_\Delta$.
\end{fact}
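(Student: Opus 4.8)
The plan is to prove both directions of Fact~\ref{fact:hyperbolic_metric_with_cusps_from_Penner_coords} by reducing everything to a single decorated ideal triangle and then gluing. The local statement I would first isolate is this: for a fixed \emph{labelled} ideal triangle there is a bijection between decorations (a choice of horocycle at each of the three ideal vertices) and triples of signed horocycle distances $(\lambda_{ij},\lambda_{jk},\lambda_{ki})\in\R^3$. Since any two ideal triangles are isometric, the triangle itself carries no moduli, so the only freedom is the three horocycles, one real parameter each; the content of the local statement is that the map sending such a decoration to its triple of signed distances is a bijection $\R^3\to\R^3$.

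To establish this I would normalize, using an isometry of $H^2$, the triangle to have ideal vertices $0$, $1$, $\infty$ in the upper half-plane model, say $i=0$, $j=1$, $k=\infty$. There a horocycle at $\infty$ is a horizontal line $\{y=t\}$ and a horocycle at a finite vertex $p$ is a Euclidean circle of diameter $s_p>0$ tangent to $\R$ at $p$. A direct computation from Definition~\ref{def:signed_horocycle_distance} gives the signed distance between the horocycles at two finite vertices $p,q$ as $\log\big(|p-q|^2/(s_p s_q)\big)$, and $\log(t/s_p)$ when one vertex is $\infty$. Writing $x_\infty=\log t$ and $x_p=-\log s_p$, one gets $\lambda_{ij}=x_i+x_j$, $\lambda_{jk}=x_j+x_k$, $\lambda_{ki}=x_k+x_i$, so the decoration-to-distance map is linear with matrix $\left(\begin{smallmatrix}1&1&0\\0&1&1\\1&0&1\end{smallmatrix}\right)$ of determinant $2$; as it is invertible and each $x$ sweeps all of $\R$ as the horocycle varies, the map is a bijection. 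This is the one genuine calculation, and it is routine.

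For the forward direction of the Fact I would then build, for every face $ijk\in F_\Delta$, the decorated ideal triangle produced above from $(\lambda_{ij},\lambda_{jk},\lambda_{ki})$, and glue these triangles along the combinatorics of $\Delta$. Two triangles sharing an edge $ij$ assign the same signed distance $\lambda_{ij}$ to it; since the shared edge is a complete geodesic decorated by one horocycle at each end, the translation of this geodesic matching the horocycles at $i$ automatically matches those at $j$ precisely because the two cut-off lengths agree, so the gluing is isometric and respects the decoration. Gluing all faces yields a hyperbolic surface with an ideal vertex at each point of $V$, of finite area since it is a finite union of ideal triangles of area $\pi$; the horocyclic arcs $\mathcal{H}_i\cap(\text{triangle})$ close up into a single horocycle around each vertex, which forces the holonomy around that vertex to preserve a horocycle and hence to be parabolic, giving a genuine cusp. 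The backward direction is then immediate: a geodesic triangulation of a decorated polyhedron has each edge an ideal geodesic between two cusps, and reading off its signed horocycle distance defines $\lambda:E_\Delta\to\R$ (Definition~\ref{def:Penner_coordinates}); the two constructions are mutually inverse up to isometry by the local bijection applied face by face.

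I expect the main obstacle to be the gluing step: verifying that the decorated gluing produces a \emph{complete} structure with honest cusps rather than an incomplete metric or a non-cuspidal end, since arbitrary gluings of ideal triangles need not be complete. The decisive point is that the matching of signed distances along edges makes the horocyclic arcs close up, pinning the cusp holonomy to be parabolic; making this closing-up argument precise — equivalently, checking that the horocycle size at each vertex is globally well defined — is the heart of the matter, whereas the per-triangle bijection is a direct computation.
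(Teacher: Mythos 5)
The paper does not prove this statement at all: it is listed among the background results of Section~\ref{sec:preliminaries}, which the author explicitly declares well-known and leaves unproved (this is Penner's correspondence between decorated hyperbolic structures and lambda-lengths, cf.~\cite{penner} and~\cite{bo}). So there is nothing to compare against; what you have written is essentially the standard proof from the literature, and it is correct in its main lines. Your local computation checks out: with the triangle normalized to $0,1,\infty$, the signed distance is $\log\bigl(|p-q|^2/(s_p s_q)\bigr)$ between finite vertices and $\log(t/s_p)$ to $\infty$, so in the coordinates $x_\infty=\log t$, $x_p=-\log s_p$ the map is the invertible linear map $(x_i,x_j,x_k)\mapsto(x_i+x_j,\,x_j+x_k,\,x_k+x_i)$, and the gluing along a shared edge is pinned down uniquely because matching the horocycle feet at $i$ matches those at $j$ (both sides cut off the same signed length $\lambda_{ij}$). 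You also correctly identify completeness as the only nontrivial point. Two remarks to tighten it: first, after observing that the horocyclic arcs close up around each vertex, you should note that the resulting holonomy is a \emph{nontrivial} parabolic (it translates the closed horocycle by its total positive length), so the cusp neighborhood is a genuine quotient cusp and the surface, being a compact set union finitely many standard cusp ends, is complete of finite area; ``preserves a horocycle hence parabolic'' alone does not exclude trivial holonomy, which would give an incomplete puncture. Second, the paper's Definition of a decoration requires the horocycles to bound \emph{disjoint} embedded cusp neighborhoods, whereas arbitrary $\lambda\in\R^{E_\Delta}$ can produce horocycles that intersect one another or fail to be embedded; your construction still produces a well-defined horocycle at each cusp, but strictly speaking the Fact as stated needs either this mild relaxation of the notion of decoration or a remark that the horocycles may be shrunk consistently --- a wrinkle the paper itself glosses over.
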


\subsection{From piecewise flat surfaces to decorated hyperbolic surfaces and back again}
Piecewise flat surfaces and decorated hyperbolic surfaces are, in fact, equivalent structures. 
\begin{theorem2}[{\cite[Theorem~4.12]{bo}}]\label{thm:ideal_Del_and_Del_triangulations}
	Let~$(S,V)$ be a marked surface with a triangulation~$\Delta$.
	\\\\
	Let~$\ell:E_\Delta\to \R_{> 0}$ be a discrete metric on $(S,V,\Delta)$ such that~$\Delta$ is a Delaunay triangulation of the piecewise flat surface~$(S,V,d_{\ell})$.  Let~$\lambda$ be the logarithmic lengths of~$\ell$ defined by Equation~\eqref{eq:log_lengths}. Then~$\Delta$ is an ideal Delaunay triangulation of the decorated hyperbolic surface defined on the marked surface~$(S,V)$ by Penner coordinates~$(\Delta, \lambda)$.
	\\\\
	Vice versa, let~$(\Delta, \lambda)$ be Penner coordinates on~$(S,V)$ such that~$\Delta$ is an ideal Delaunay triangulation of the decorated hyperbolic surface defined on~$(S,V)$ by~$(\Delta, \lambda)$. Then the map~$\ell:E_\Delta\to \R_{\geq 0}$, defined by Equation~\eqref{eq:log_lengths}, is a discrete metric on~$(S,V,\Delta)$, and~$\Delta$ is a Delaunay triangulation of the polyhedral surface~$(S,V,d_\ell)$.
\end{theorem2}

\subsection{Discrete conformal classes}
Theorem~\ref{thm:ideal_Del_and_Del_triangulations} tells us that each piecewise flat surface induces a decorated hyperbolic surface, and vice versa.
\begin{definition2}\label{def:conformal_equivalence}
	Two PL-metrics on a marked surface~$(S,V)$ are \textbf{discrete conformally equivalent} if the two induced decorated hyperbolic surfaces are isometric, through a map $\varphi$, where $\varphi$ is homotopic to the identity in $S-V$ relative to $V$.
\end{definition2}
Discrete conformal equivalence is an equivalence relation on the space of PL-metrics of a marked surface~$(S,V)$. The corresponding equivalence classes are called \emph{conformal classes}. In particular, discrete conformally equivalent PL-metrics induce \textbf{different} decorations on the -- up to isometry -- \textbf{same} hyperbolic surface.
\\\\
Let~$d$ and~$\tilde{d}$ be two discrete conformally equivalent PL-metrics on~$(S,V)$, and let~$\mathcal{H}$ and~$\tilde{\mathcal{H}}$ denote the two decorations induced on the hyperbolic surface~$(S,V,d_{hyp})$ by~$d$ and~$\tilde{d}$, respectively.
\\\\
Let~$u_i$ denote the signed distance from the horocycle~$\mathcal{H}_i$ to the horocycle~$\tilde{\mathcal{H}}_i$. The distance is taken positive if~$\tilde{\mathcal{H}}_i$ is closer to the cusp at~$i$ than~$\mathcal{H}_i$  -- as illustrated in Figure~\ref{fig:distance_horocycles} in the halfplane model -- and negative otherwise.
The map
$$u:V\to\R, \qquad i\mapsto u_i,$$
is called a \emph{conformal factor}, or a \emph{conformal change} from $d$ to~$\tilde{d}$.
\\
\begin{wrapfigure}{r}{0.25\textwidth}
	\vspace{-22pt}
	\begin{center}
		\labellist
		\small\hair 2pt
		\pinlabel {$\mathcal{H}_i$} [ ] at 229 108
		\pinlabel {$\tilde{\mathcal{H}}_i$} [ ] at 229 230
		\pinlabel {$u_i$} [ ] at 72 135
		\pinlabel {$i$} [ ] at 152 250
		\endlabellist
		\centering
		\includegraphics[width=0.23\textwidth]{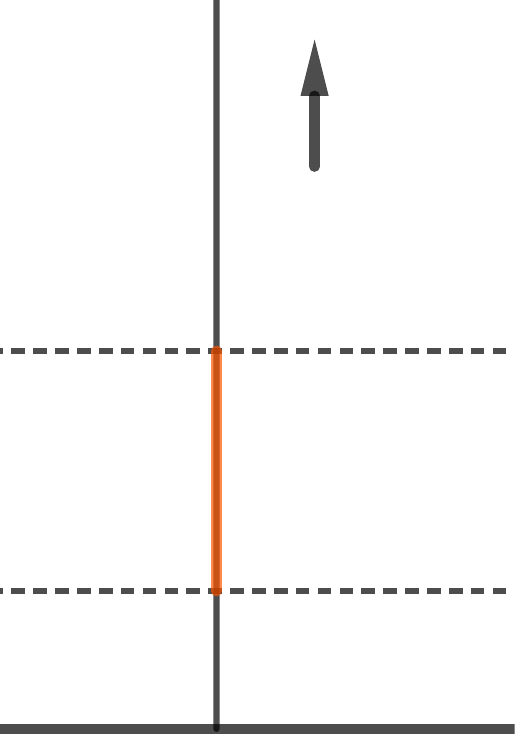}
	\end{center}
	\vspace{-15pt}
	\caption{}
	\label{fig:distance_horocycles}
\end{wrapfigure}

The position of each horocycle in~$\tilde{\mathcal{H}}$ is completely determined by the decorated hyperbolic surface~$(S,V,d_{hyp}, \mathcal{H})$ and the conformal factor~$u$. Thus, for a fixed marked surface~$(S,V)$, each PL-metric~$\tilde{d}$ in the conformal class of the PL-metric~$d$ is uniquely defined by~$d$ and the conformal factor~$u$.
\\\\
To express this relation, we denote PL-metric~$\tilde{d}$ and the decoration~$\tilde{\mathcal{H}}$ by~$d(u) $ and $\mathcal{H}(u)$, respectively. Further, if~$\tilde{\Delta}$ is a Delaunay triangulation of~$(S,V,\tilde{d})$, the Penner coordinates~$(\tilde{\Delta},\tilde{\lambda})$ are denoted by~$(\Delta(u),\lambda(u))$.
\\\\
Vice versa, each conformal factor defines a PL-metric in the conformal class of~$d$. In other words:

\begin{proposition2}\label{prop:conformal_classes_parametrisation}
	The conformal class of the piecewise flat surface~$(S,V,d)$ is parametrized by the vector space
	$$ \R^V=\{ u:V\to \R \}.$$
\end{proposition2}
As shown by Robert Penner in~\cite{penner}, the vector space $\R^V$ admits a cell decomposition into Penner cells.
\begin{definition2}\label{def:Penner_sets}
	Let~$(S,V,d)$ be a piecewise flat surface, and let~$\Delta$ be a triangulation of the marked surface~$(S,V)$. The \textbf{Penner cell} of~$\Delta$ in the conformal class of~$(S,V,d)$ is the set
	$$\mathcal{A}_\Delta = \{ u\in \R^V \mid \Delta \text{ is a Delaunay triangulation of }(S,V,d(u)) \}.$$
\end{definition2}

The set of all triangulations with non-empty Penner cells in the conformal class of~$(S,V,d)$ is denoted by~$\mathfrak{D}(S,V,d)$.
%
%
\\\\
Discrete conformal equivalence also induces a relation on discrete metrics.
\begin{proposition2}\label{prop:equivalence_disc_conformal_fixed_triangulation}
	Let~$d$ and~$\tilde{d}$ be two conformally equivalent PL-metrics on a marked surface~$(S,V)$, related by the conformal factor~$u:V\to \R$, and let~$\Delta$ be a geodesic triangulation of the surface~$(S,V,d)$, as well as the surface~$(S,V,\tilde{d})$. Then the discrete metrics~$\ell$ and~$\tilde{\ell}$, induced by~$d$ and~$\tilde{d}$, respectively, satisfy
	\begin{equation*}
	\tilde{\ell}_{ij} = \ell_{ij}e^{\frac{u_i+u_j}{2}}
	\end{equation*}
	for every edge~$ij\in E_\Delta$.
\end{proposition2}
For proof see \cite[Theorem 5.1.2]{bosa}.
\begin{remark2}
	Proposition~\ref{prop:equivalence_disc_conformal_fixed_triangulation} is the definition of discrete conformal equivalence for piecewise flat surfaces with fixed triangulation, introduced by Feng Luo~\cite{luo}.
\end{remark2}

\section{Counterexamples to uniqueness of metrics with constant curvature}\label{sec:counterexamples}
Uniqueness of PL-metrics with constant discrete Gaussian curvature up to global scaling in discrete conformal classes holds in three special cases:
\begin{itemize}
	\item \emph{$S$ is of genus zero and~$\abs{V} = 3$.}\\
	This follows from the positive semi-definiteness of the second derivative of the function~$\F$, defined in Fact~\ref{fact:alternative_vat_principle}.
	\item \emph{$S$ is of genus one.}\\
	In this case the Yamabe problem is equivalent to the discrete uniformization problem. The uniqueness follows from the positive semi-definiteness of the second derivative of function~$\E$ (Definition~\ref{def:E_function_Penner_cell}) and was proved by Xianfeng Gu, Feng Luo, Jian Sun and Tianqi Wu in \cite{guluo1}.
	\item \emph{$S$ is of genus larger than one and~$\abs{V} = 1$.}\\
	This case is trivial, since every discrete conformal class consists of one PL-metric up to a global scaling.
\end{itemize}
In order to show that uniqueness does not hold in general, we construct several examples of pairs of discrete conformally equivalent PL-metrics with constant discrete Gaussian curvature on the \emph{sphere with four marked points} -- that is, a tetrahedron -- and the \emph{surface of genus two with two marked points}.
\begin{figure}[h!]
	\centering
	\begin{subfigure}[b]{0.4\textwidth}
		\labellist
		\small\hair 2pt
		\pinlabel {$1$} [ ] at 235 354
		\pinlabel {$3$} [ ] at 126 172
		\pinlabel {$2$} [ ] at 426 157
		\pinlabel {$4$} [ ] at 10 377
		\pinlabel {$4$} [ ] at 471 399
		\pinlabel {$4$} [ ] at 262 0
		\pinlabel {$b$} [ ] at 164 268
		\pinlabel {$a$} [ ] at 339 261
		\pinlabel {$c$} [ ] at 267 195
		\pinlabel {$\bar{a}$} [ ] at 60 267
		\pinlabel {$\bar{a}$} [ ] at 190 73
		\pinlabel {$\bar{b}$} [ ] at 447 274
		\pinlabel {$\bar{b}$} [ ] at 334 76
		\pinlabel {$\bar{c}$} [ ] at 126 363
		\pinlabel {$\bar{c}$} [ ] at 339 379
		\endlabellist
		\centering
		\includegraphics[width=\textwidth]{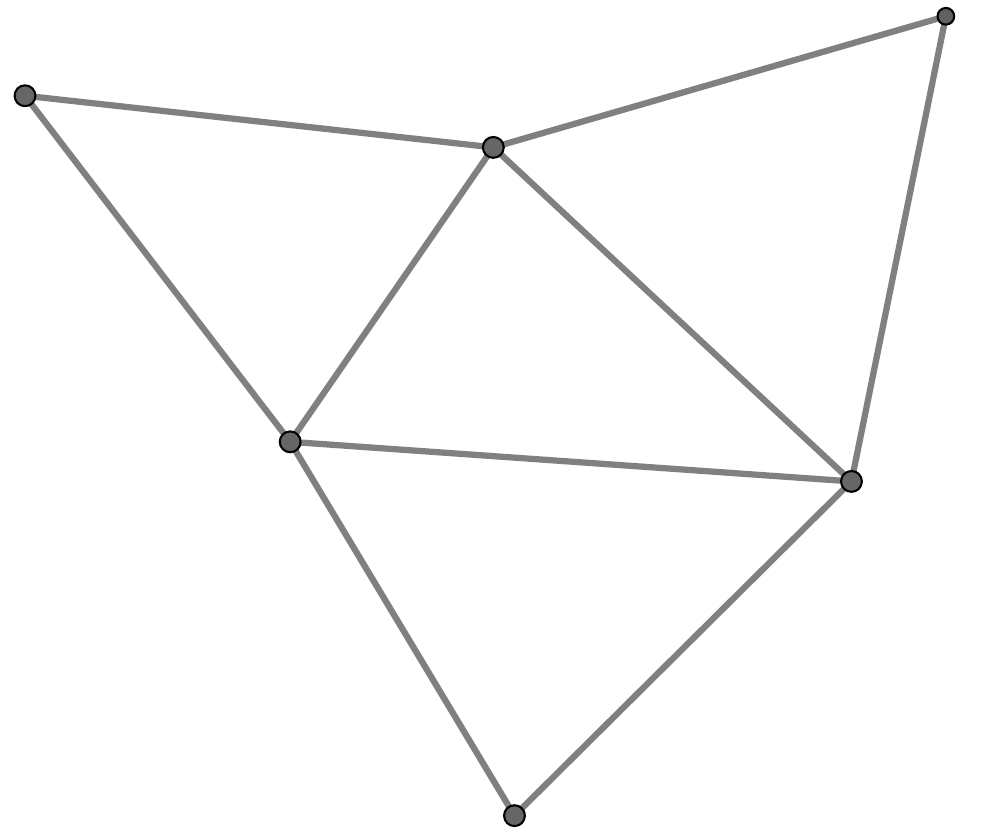}
		\\~\\~
	\end{subfigure}
	\quad
	\begin{subfigure}[b]{0.52\textwidth}
		\labellist
		\small\hair 2pt
		\pinlabel {$a$} [ ] at 391 289
		\pinlabel {$b$} [ ] at 132 320
		\pinlabel {$c$} [ ] at 263 254
		\pinlabel {$\bar{a}$} [ ] at 145 127
		\pinlabel {$b$} [ ] at 398 107
		\pinlabel {$F_a$} [ ] at 313 271
		\pinlabel {$F_a$} [ ] at 399 220
		\pinlabel {$F_b$} [ ] at 327 121
		\pinlabel {$F_b$} [ ] at 398 170
		\pinlabel {$F_c$} [ ] at 292 223
		\pinlabel {$F_c$} [ ] at 294 156
		\pinlabel {$F_{\bar{a}}$} [ ] at 192 142
		\pinlabel {$F_{\bar{a}}$} [ ] at 143 181
		\pinlabel {$F_{\bar{b}}$} [ ] at 138 246
		\pinlabel {$F_{\bar{b}}$} [ ] at 199 267
		\pinlabel {$F_{\bar{c}}$} [ ] at 249 220
		\pinlabel {$F_{\bar{c}}$} [ ] at 246 154
		\endlabellist
		\centering
		\includegraphics[width=\textwidth]{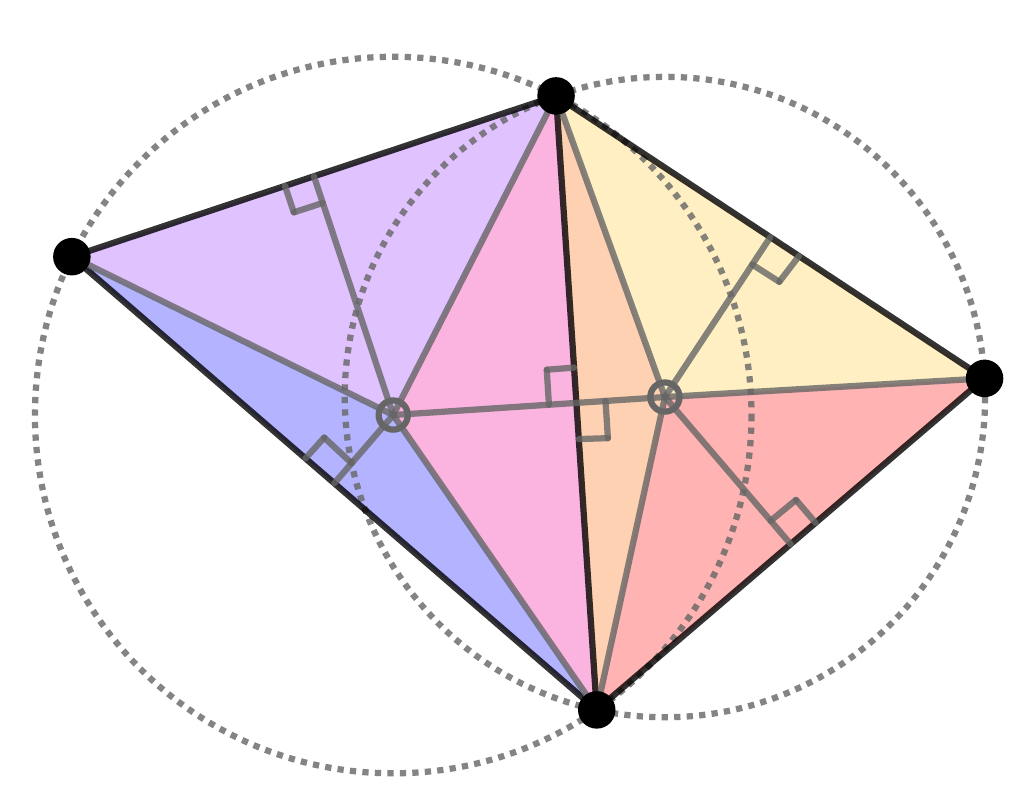}
		\label{fig:hyperbolic_example_2}
	\end{subfigure}
	\caption{A tetrahedron (left) and a division of areas in the two triangles (right). }
	\label{fig:almost_symmetric}
\end{figure}
\paragraph{Tetrahedra with constant curvature}
We start with a combinatorial tetrahedron, denoting the vertices and edges as in Figure~\ref{fig:almost_symmetric} (left). On this tetrahedron we define the PL-metric~$d_0$ by prescribing the following lengths to the edges:
$$a = \overline{a} = 1,\qquad b=\overline{b}=b_0, \qquad c = \overline{c}=c_0.$$
\begin{fact}\label{fact:Delaunay_equivalence}
	Let~$\Delta$ be a geodesic triangulation of a piecewise flat surface~$(S,V,d)$ and let $ijk$ and~$ijl $ be two neighboring triangles in~$ F_\Delta$. Let~$\alpha_k, \alpha_l$ be the angles opposite of the edge~$ij$ in the triangles~$ijk$ and~$ijl$, respectively. The edge~$ij$ is Delaunay if one of the following equivalent \textbf{Delaunay conditions} holds:
	\begin{compactenum}[a)]
		\item $\cot\alpha_k + \cot\alpha_l\geq 0$,
		\item $\alpha_k+ \alpha_l \leq\pi$,
		\item $\cos\alpha_k + \cos\alpha_l \geq 0.$
	\end{compactenum}
\end{fact}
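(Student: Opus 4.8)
The plan is to establish the equivalence of the three Delaunay conditions a), b) and c) by elementary trigonometry, and then connect any one of them to the geometric Delaunay property as stated earlier in terms of the circumcircle. I would treat the two halves separately: first the purely trigonometric equivalences among a), b), c), which hold for any two angles $\alpha_k,\alpha_l\in(0,\pi)$ subject to the triangle constraint, and then the identification of these with the classical empty-circumcircle characterization.

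For the equivalence of a), b) and c), I would argue as follows. Since $\alpha_k,\alpha_l\in(0,\pi)$ and the angles of each triangle sum to $\pi$, both angles lie strictly in $(0,\pi)$, so $\sin\alpha_k,\sin\alpha_l>0$. Writing
\begin{equation*}
\cot\alpha_k + \cot\alpha_l = \frac{\cos\alpha_k\sin\alpha_l + \sin\alpha_k\cos\alpha_l}{\sin\alpha_k\sin\alpha_l} = \frac{\sin(\alpha_k+\alpha_l)}{\sin\alpha_k\sin\alpha_l},
\end{equation*}
the denominator is positive, so a) is equivalent to $\sin(\alpha_k+\alpha_l)\geq 0$. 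Since $\alpha_k+\alpha_l\in(0,2\pi)$, we have $\sin(\alpha_k+\alpha_l)\geq 0$ precisely when $\alpha_k+\alpha_l\leq\pi$, which is b). For the equivalence of b) and c), I would use the sum-to-product identity $\cos\alpha_k+\cos\alpha_l = 2\cos\bigl(\tfrac{\alpha_k+\alpha_l}{2}\bigr)\cos\bigl(\tfrac{\alpha_k-\alpha_l}{2}\bigr)$; the factor $\cos\bigl(\tfrac{\alpha_k-\alpha_l}{2}\bigr)$ is positive because $\tfrac{\alpha_k-\alpha_l}{2}\in(-\tfrac{\pi}{2},\tfrac{\pi}{2})$, so c) holds if and only if $\cos\bigl(\tfrac{\alpha_k+\alpha_l}{2}\bigr)\geq 0$, i.e. $\tfrac{\alpha_k+\alpha_l}{2}\leq\tfrac{\pi}{2}$, which is again b).

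It then remains to connect condition b) to the geometric definition of a Delaunay edge given earlier, namely that the vertex $l$ of one adjacent triangle does not lie inside the circumcircle of the other triangle $ijk$. Here I would invoke the inscribed-angle theorem: the angle $\alpha_k$ subtended by the chord $ij$ at $k$ determines, via the circumcircle of $ijk$, whether a competing point $l$ on the same side lies inside, on, or outside that circle according to whether its subtended angle $\alpha_l$ is larger than, equal to, or smaller than $\pi-\alpha_k$. Thus $l$ is outside or on the circumcircle exactly when $\alpha_l\leq\pi-\alpha_k$, which is b). Strictly, since a piecewise flat surface is locally Euclidean, this reasoning applies inside the planar development of the two triangles across the edge $ij$, so the local flatness guarantees the inscribed-angle theorem is available.

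I expect the main obstacle to be the geometric step rather than the trigonometric one: the circumcircle condition must be phrased carefully because one develops the two triangles into the Euclidean plane around the shared edge, and one must ensure $k$ and $l$ land on opposite sides of the line through $i$ and $j$ for the inscribed-angle comparison to apply with the correct orientation. Since the statement asserts only that the edge is Delaunay \emph{if} one of the conditions holds, and the conditions are mutually equivalent, the cleanest route is to prove the trigonometric equivalences in full and then cite or briefly justify the classical correspondence between b) and the empty-circumcircle property, which is standard in the literature on Delaunay triangulations.
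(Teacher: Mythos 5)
The paper states this result as a \emph{Fact} and supplies no proof of its own (it is treated as standard), so there is no argument to compare against; your proof is correct and is the usual one. The trigonometric identities $\cot\alpha_k+\cot\alpha_l=\sin(\alpha_k+\alpha_l)/(\sin\alpha_k\sin\alpha_l)$ and $\cos\alpha_k+\cos\alpha_l=2\cos\bigl(\tfrac{\alpha_k+\alpha_l}{2}\bigr)\cos\bigl(\tfrac{\alpha_k-\alpha_l}{2}\bigr)$ do yield the equivalence of a), b), c) for $\alpha_k,\alpha_l\in(0,\pi)$, and the inscribed-angle comparison in the planar development of the two triangles across $ij$ (where $k$ and $l$ necessarily lie on opposite sides of the line through $i$ and $j$) correctly identifies b) with the empty-circumcircle definition given in Section 2.
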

The values of $b_0$ and $c_0$ need to be greater than 1 and chosen so that the edges of the tetrahedron are Delaunay. This is the case if and only if the triangle with edge lengths~$1,b_0, c_0$ is acute. Due to condition \emph{c)} in Fact~\ref{fact:Delaunay_equivalence}, this is further equivalent to the following inequality:
\begin{align}\label{eq:Delaunay_cond_c}
c_0^2\leq a_0^2 +b_0^2= 1+b_0^2.
\end{align}
Denoting the area of the triangle with edge lengths~$1,b_0, c_0$ by~$A$, one sees that the PL-metric~$d_0$ has constant discrete Gaussian curvature,
$$K_i = \frac{\pi}{A} \qquad\text{for }i\in\{1,\dots,4\}.$$
We now apply the following family of discrete conformal changes to~$d_0$:
\begin{align*}
u\circ v:\R\to \R^4, \qquad u(v) = (u_1,u_2,u_3,u_4)(v) :=(0,0,v,v). 
\end{align*}

\begin{lemma2}\label{lemma:equal_curvature_almost_symmetric}
	Let
	$$\mathcal{S}_{(b_0, c_0)}:=\left[ -\log(b_0^2 + c_0^2), \log(b_0^2 + c_0^2) \right].$$
	The PL-metric~$d(v)$, defined by applying the discrete conformal change~$u(v)$ to the metric~$d_0$, has Delaunay edges if~$v\in \mathcal{S}_{(b_0, c_0)}$. Its discrete Gaussian curvature at two pairs of vertices is equal,
	$$K_1 = K_2, \qquad\text{and}\qquad K_3 = K_4.$$
\end{lemma2}

\begin{proof}
	For each~$v\in\R$ the tetrahedron with metric~$d(v)$ has edge lengths
	$$a=1, \qquad b=\overline{b}=b_0e^{v/2},\qquad c=\overline{c}=c_0e^{v/2},\qquad \bar{a}=e^{v}.$$
	The tetrahedron thus consists of two triangles with edge lengths~$a,b,c$ and two triangles with edge lengths~$\bar{a}, b, c$. The equality of the curvatures follows immediately from the fact that~$W_1 = W_2, W_3 = W_4, A_1 = A_2$ and~$A_3 = A_4.$
	\\\\
	The minimal and maximal value of the parameter~$v$ follow from the properties of Delaunay edges (Fact~\ref{fact:Delaunay_equivalence}) and Equation~\eqref{eq:Delaunay_cond_c}.
\end{proof}
Lemma~\ref{lemma:equal_curvature_almost_symmetric} implies that the PL-metric~$d(v)$ has constant discrete Gaussian curvature if~$K_1 = K_3$. In order to test if, for a fixed value of~$b_0$ and~$c_0$, this equality holds, we transform it into an expression more favorable for calculations.
\\\\
Let~$A$ and~$\bar{A}$ denote the area of the triangles with side lengths~$a,b,c$ and~$\bar{a}, b,c,$ respectively, and let~$F_a,\dots, F_{\bar{c}}$ denote the areas as in Figure~\ref{fig:almost_symmetric} (right).
\begin{lemma2}\label{lemma:def_of_g}
	The PL-metric~$d(v)$ has constant discrete Gaussian curvature if and only if~$v$ is a zero of the map
	\begin{align*}
	g_{(b_0, c_0)}:\mathcal{S}_{(b_0, c_0)}\to \R, \qquad v\mapsto 2\pi(F_{\bar{a}}-F_a)+ (\alpha - \bar{\alpha})(A+\bar{A}).
	\end{align*}
\end{lemma2}
\begin{proof}
	Follows by a straightforward calculation:
	\begin{align*}
	K_1 = K_3 \iff W_1 A_3 = W_3 A_1\iff 2\pi(F_{\bar{a}}-F_a)= (\bar{\alpha} - \alpha)(A+\bar{A}).
	\end{align*}	
\end{proof}
We plotted the graphs of the function $g_{(b_0, c_0)}$ for various values of $b_0$ and $ c_0$ in Figure~\ref{fig:counterex_summary}.
\begin{figure}[htb]
	\labellist
	\small\hair 2pt
	\pinlabel {$2$} [ ] at 205 327
	\pinlabel {$2$} [ ] at 340 237
	\pinlabel {$2$} [ ] at 505 205
	\pinlabel {$2$} [ ] at 679 240
	\pinlabel {$2$} [ ] at 821 328
	\pinlabel {$1$} [ ] at 893 120
	\pinlabel {$1$} [ ] at 672 8
	\pinlabel {$1$} [ ] at 405 6
	\pinlabel {$1$} [ ] at 190 76
	\pinlabel {$1$} [ ] at 16 229
	\pinlabel {$a^4$} [ ] at 274 286
	\pinlabel {$a^3$} [ ] at 424 220
	\pinlabel {$a^4$} [ ] at 593 219
	\pinlabel {$a^3$} [ ] at 748 283
	\pinlabel {$a^2$} [ ] at 787 71
	\pinlabel {$a^1$} [ ] at 549 7
	\pinlabel {$a^2$} [ ] at 304 36
	\pinlabel {$a^1$} [ ] at 94 155
	\pinlabel {$b^1$} [ ] at 93 302
	\pinlabel {$c^1$} [ ] at 173 214
	\pinlabel {$b^2$} [ ] at 260 177
	\pinlabel {$c^2$} [ ] at 356 118
	\pinlabel {$b^3$} [ ] at 457 116
	\pinlabel {$c^3$} [ ] at 571 96
	\pinlabel {$b^4$} [ ] at 660 124
	\pinlabel {$c^4$} [ ] at 778 156
	\pinlabel {$b^1$} [ ] at 836 231
	\endlabellist
	\centering
	\includegraphics[width=\textwidth]{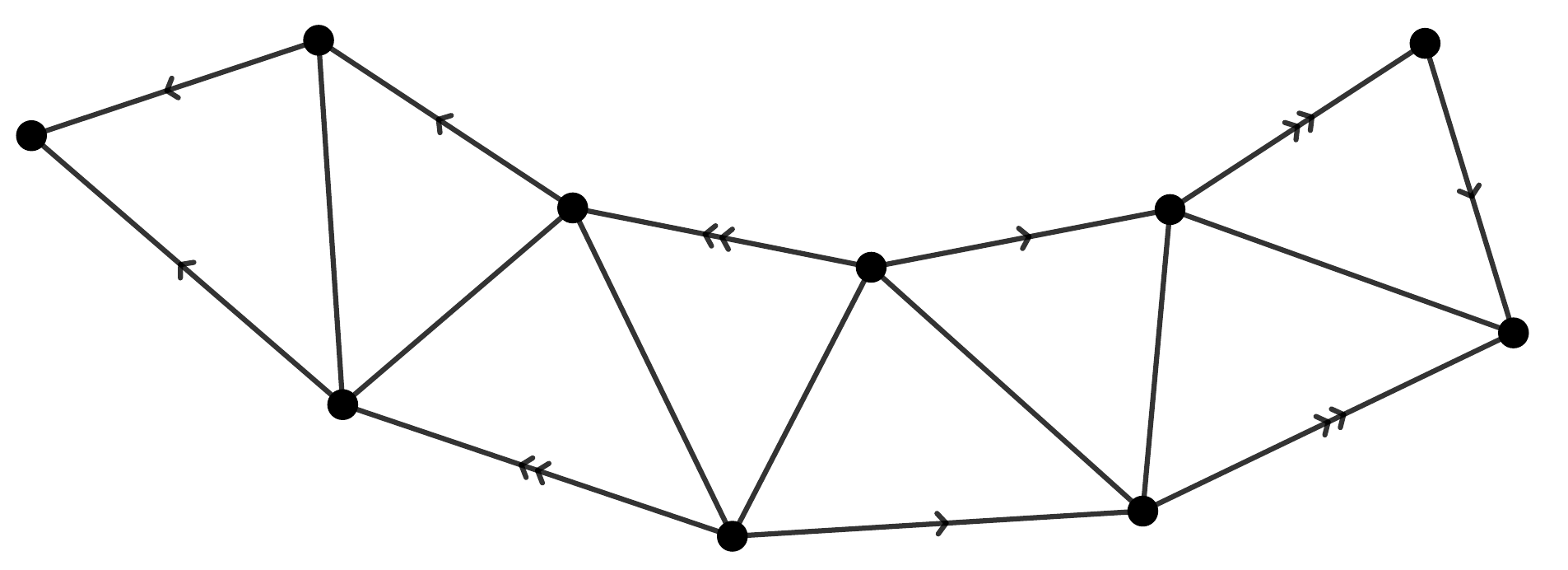}
	\caption{ }
	\label{fig:double_torus_comb}
\end{figure}
\paragraph{Surfaces of genus two with two marked points and constant curvature}
The initial metric~$d_0$ is defined on a triangulation with combinatorics as in Figure~\ref{fig:double_torus_comb}, with edge lengths prescribed as follows:
$$a^1=...=a^4 = 1,\quad b^1 = ... = b^4 = b_0,\quad c^1 = ... = c^4 = c_0,$$
for two values~$b_0,c_0 \geq 1$ satisfying Condition~\eqref{eq:Delaunay_cond_c}. As in the previous paragraph, one can easily check that~$d_0$ has constant discrete Gaussian curvature
$$K_i =- \frac{\pi}{2A},$$
where~$A$ is the area of the triangle with edge lengths~$1,b_0, c_0$. We now apply the following family of discrete conformal changes to~$d_0$:
\begin{align*}
u\circ v:\R\to \R^2, \qquad u(v) = (u_1,u_2)(v) :=(0,v). 
\end{align*}
The following lemma is the analogon of Lemmata~\ref{lemma:equal_curvature_almost_symmetric} and~\ref{lemma:def_of_g}.
\begin{lemma2}
	The PL-metric~$d(v)$, given by applying the discrete conformal change~$u(v)$ to the metric $d_0$, has Delaunay edges if~$v\in \mathcal{S}_{(b_0, c_0)}$. It has constant discrete Gaussian curvature if and only if~$v$ is a zero of the map
	\begin{align*}
	h_{(b_0, c_0)}:\mathcal{S}_{(b_0, c_0)}\to \R, \qquad v\mapsto \pi(F_{\bar{a}}-F_a)+ ( \bar{\alpha}-\alpha)(A+\bar{A}).
	\end{align*}
\end{lemma2}
\begin{proof}
	Analogous to the proofs of Lemmata~\ref{lemma:equal_curvature_almost_symmetric} and~\ref{lemma:def_of_g}.
\end{proof}
\begin{figure}[h!]
	\centering
	\begin{subfigure}[b]{0.44\textwidth}
		\labellist
		\small\hair 2pt
		\pinlabel {{\color[rgb]{0.368417, 0.506779, 0.709798} $g_{(1.6,1.75)}(v)$}} [ ] at 5 235
		\pinlabel {{\color[rgb]{0.880722, 0.611041, 0.142051} $g_{(1.8,1.95)}(v)$}} [ ] at 5 210
		\pinlabel {{\color[rgb]{0.560181, 0.691569, 0.194885} $g_{(2.0,2.15)}(v)$}} [ ] at 5 185
		\pinlabel {{\color[rgb]{0.922526, 0.385626, 0.209179} $g_{(2.2,2.35)}(v)$}} [ ] at 5 160
		\endlabellist
		\centering
		\includegraphics[width=\textwidth]{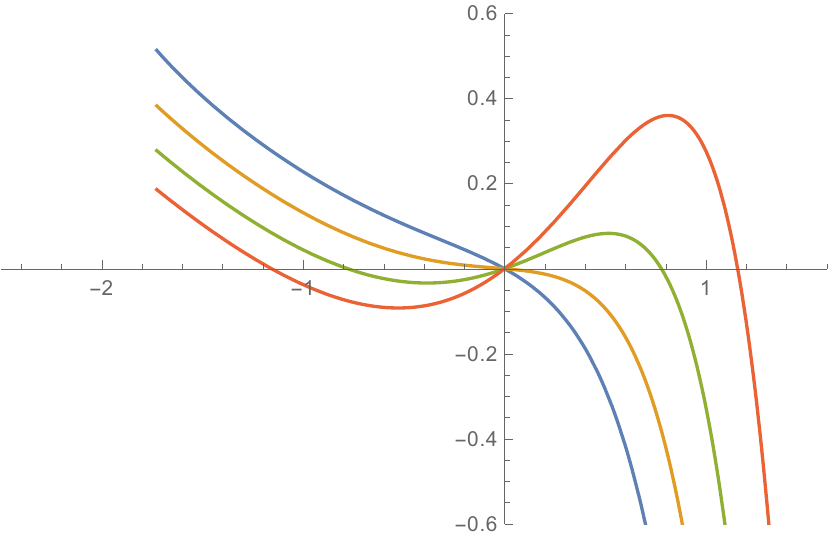}
		\label{fig:counterex_tetra_summary}
	\end{subfigure}
	\begin{subfigure}[b]{0.55\textwidth}
		\labellist
		\small\hair 2pt
		\pinlabel {{\color[rgb]{0.368417, 0.506779, 0.709798} $h_{(2.6,2.75)}(v)$}} [ ] at 21 305
		\pinlabel {{\color[rgb]{0.880722, 0.611041, 0.142051} $h_{(2.8,2.95)}(v)$}} [ ] at 21 285
		\pinlabel {{\color[rgb]{0.560181, 0.691569, 0.194885} $h_{(3.0,3.15)}(v)$}} [ ] at 21 265
		\pinlabel {{\color[rgb]{0.922526, 0.385626, 0.209179} $h_{(3.2,3.35)}(v)$}} [ ] at 21 245
		\endlabellist
		\centering
		\includegraphics[width=\textwidth]{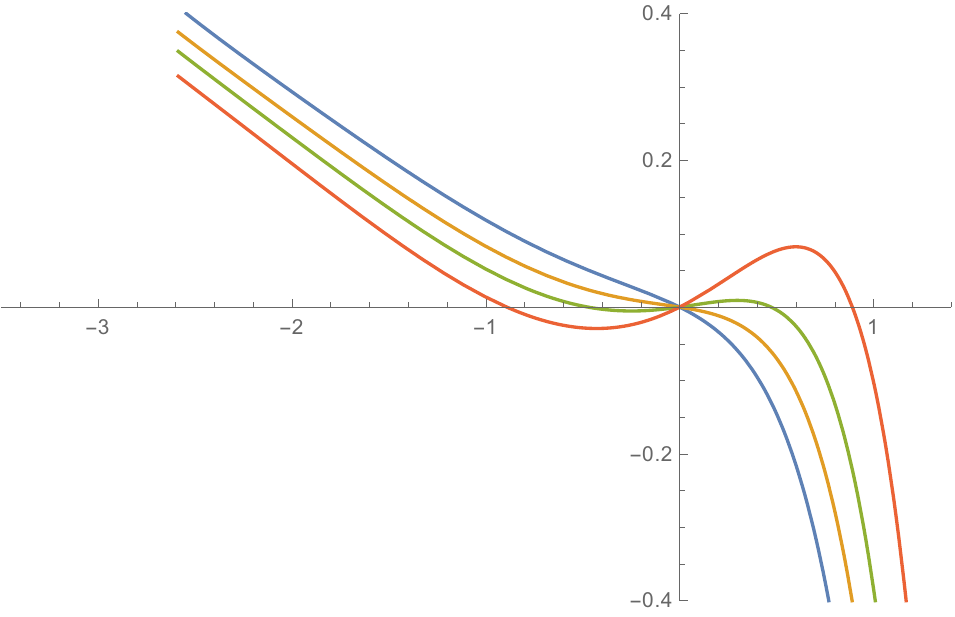}
		\label{fig:counterex_hyp_summary}
	\end{subfigure}
	\caption{Graphs of the functions~$g$ (left) and~$h$ (right) for various values of~$b_0$ and~$c_0$. }
	\label{fig:counterex_summary}
\end{figure}
The number of critical points of the maps~$g_{(b_0, c_0)}$ and~$h_{(b_0, c_0)}$ varies depending on the choice of $(b_0, c_0)$. Figure~\ref{fig:counterex_summary} illustrates the graphs of~$g_{(b_0, c_0)}$ and~$h_{(b_0, c_0)}$ for various values of~$(b_0, c_0)$. In each graph the red and green curves correspond to discrete conformal classes with more than one metric with constant discrete Gaussian curvature.

\section{Variational principles}\label{sec:variational_principles}
The goal of this article is to prove the existence of piecewise flat surfaces with constant Gaussian curvature, where the discrete Gaussian curvature is the quotient of the angle defect and the area of the corresponding Voronoi cell. In this section we translate this setting into an optimization problem which we describe by three variational principles. To this end, we define two functions -- $\mathbb{E}$ and $A_{tot}$ -- whose partial derivatives correspond to the angle defect and the area of the Voronoi cell, respectively. The functions $\mathbb{E}$ and $A_{tot}$ form the two essential building blocks of the variational principles.
\subsection{Two essential building blocks}\label{sec:two_essential_blocks}
\paragraph{The function $\mathbb{E}$}
The function~$\E$, which we will introduce shortly, was defined by Alexander Bobenko et al. in \cite{bosa}. As we will see, it is locally convex and its partial derivatives correspond to the angle defects at the vertices. Its building block is a peculiar function~$f$.
\begin{definition2}\label{def:the_function_f}
	Consider a Euclidean triangle with edge lengths~$a,b,c$ and angles $\alpha,\beta,\gamma$, opposite to edges~$a,b,c$, respectively.
	Let
	$$x=\log a,\qquad y=\log b,\qquad z=\log c,$$
	as illustrated in Figure~\ref{fig:f_function_triangle_def}. Let~$\mathfrak{A}$ be the set of all triples~$(x,y,z)\in\R^3$, such that~$(a,b,c)$ satisfy the triangle inequalities:
	$$\mathfrak{A} = \{(x,y,z)\in \R^3 \mid a+b-c>0, a-b+c>0,-a+b+c>0 \}.$$
	The function~$f$ is defined as follows:
	$$f:\mathfrak{A}\to \R, \quad f(x,y,z) = \alpha x + \beta y + \gamma z + \LobL(\alpha) + \LobL(\beta)+\LobL(\gamma),$$
	where
	$$\LobL(\alpha) = -\int_{0}^{\alpha}\log\abs{2\sin(t)}\: dt$$
	is Milnor's Lobachevsky function, introduced by John Milnor in~\cite{milnor}.
\end{definition2}
\begin{figure}[h!]
	\centering
	\begin{subfigure}[b]{0.40\textwidth}
		\labellist
		\small\hair 2pt
		\pinlabel {$\alpha$} [ ] at 133 120
		\pinlabel {$\beta$} [ ] at 433 79
		\pinlabel {$\gamma$} [ ] at 291 295
		\pinlabel {\rotatebox{352}{$c = e^z$}} [ ] at 261 40
		\pinlabel {\rotatebox{50}{$b = e^y$}} [ ] at 180 251
		\pinlabel {\rotatebox[]{305}{$a = e^x$}} [ ] at 420 220
		\endlabellist
		\centering
		\includegraphics[width=\textwidth]{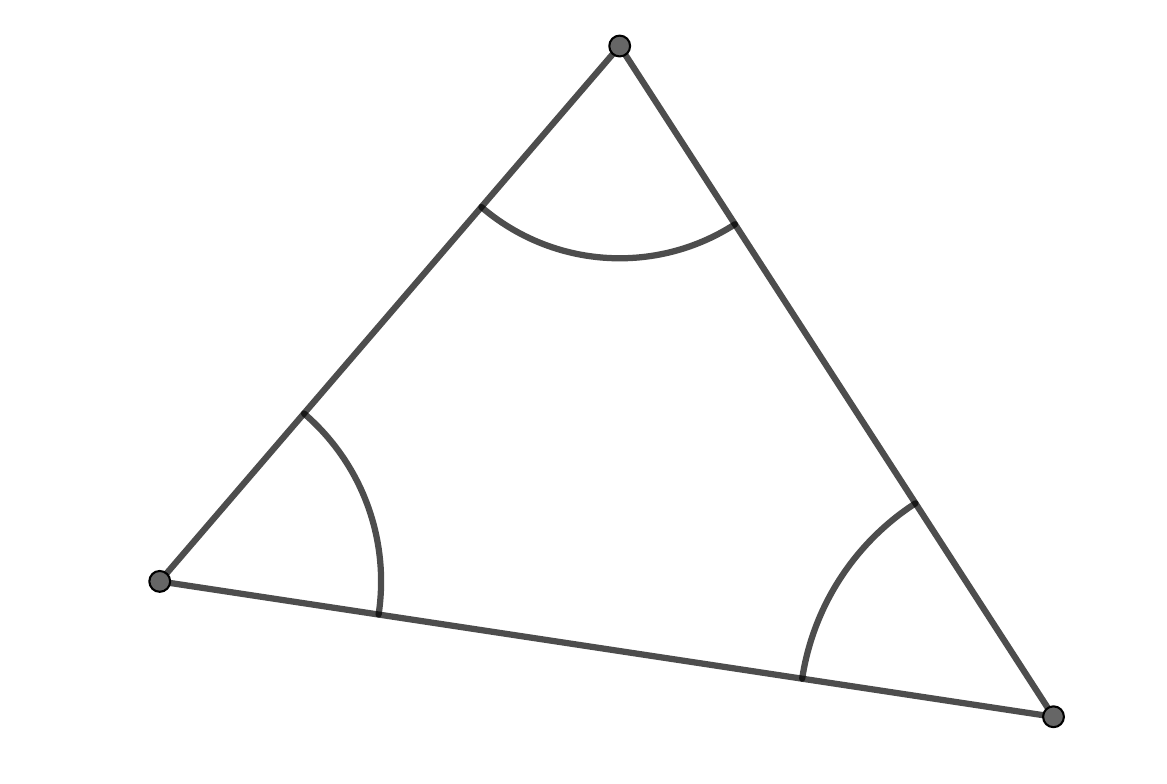}
		\caption{Logarithmic edge lengths of a triangle. }
		\label{fig:f_function_triangle_def}
	\end{subfigure}
	\quad
	\begin{subfigure}[b]{0.50\textwidth}
		\centering
		\includegraphics[width=\textwidth]{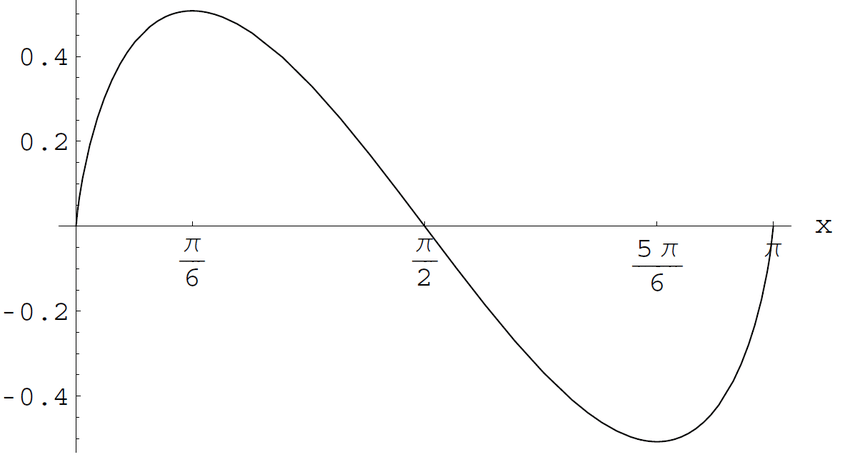}
		\caption{Graph of Milnor's Lobachevsky function,~$y=\LobL(x)$. }
		\label{fig:lobachevsky_function}
	\end{subfigure}
	\caption{ }
\end{figure}

\begin{fact}\label{fact:lobachevsky_function}
	Milnor's Lobachevsky function~$\LobL(x)$ is odd, $\pi$-periodic, and smooth except at~$x\in\pi\Z$.
\end{fact}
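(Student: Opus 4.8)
The three claimed properties all follow from elementary manipulations of the integrand $g(t)=\log\abs{2\sin t}$ together with the fundamental theorem of calculus; the only point requiring any real work is the $\pi$-periodicity, which I would reduce to the classical evaluation $\int_0^\pi \log\abs{2\sin t}\,dt = 0$.

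For oddness, I would substitute $t\mapsto -t$ in the defining integral and use $\abs{\sin(-t)}=\abs{\sin t}$, which immediately gives $\LobL(-x)=-\LobL(x)$. For the smoothness claim, note that $g$ is real-analytic on $\R\setminus\pi\Z$, so by the fundamental theorem of calculus $\LobL$ is differentiable there with $\LobL'(x)=-\log\abs{2\sin x}$, itself real-analytic; hence $\LobL$ is $C^\infty$ away from $\pi\Z$. At a point $k\pi$ the integrand has a logarithmic singularity, which is integrable, so $\LobL$ remains continuous; but since $\LobL'(x)=-\log\abs{2\sin x}\to+\infty$ as $x\to k\pi$, the function fails to be $C^1$ there. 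I would confirm this by estimating the difference quotient at $0$ using $\int_0^x\log\abs{2\sin t}\,dt \sim x\log(2x)-x$, which shows the derivative is genuinely infinite.

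For periodicity, the key observation is that $\abs{\sin(t+\pi)}=\abs{\sin t}$, so $g$ is $\pi$-periodic and the derivative of $\LobL(x+\pi)-\LobL(x)$ vanishes identically. Thus this difference is a constant, equal to its value $\LobL(\pi)=-\int_0^\pi\log\abs{2\sin t}\,dt$ at $x=0$, and it suffices to show this integral is zero. I would write $\int_0^\pi\log\abs{2\sin t}\,dt=\pi\log 2 + 2\int_0^{\pi/2}\log\sin t\,dt$ (splitting off the factor $2$ and using the symmetry $t\mapsto\pi-t$), and then establish the classical value $\int_0^{\pi/2}\log\sin t\,dt=-\tfrac{\pi}{2}\log 2$ via the duplication identity $\sin t\cos t=\tfrac12\sin 2t$ combined with the symmetry $t\mapsto\tfrac{\pi}{2}-t$. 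Substituting back yields $\LobL(\pi)=0$ and hence $\LobL(x+\pi)=\LobL(x)$.

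The main (and only) obstacle is the evaluation of $\int_0^{\pi/2}\log\sin t\,dt$; everything else is a direct consequence of the fundamental theorem of calculus and the parity and periodicity of $\abs{\sin t}$. Since this integral is entirely standard, I would expect the full proof to be short.
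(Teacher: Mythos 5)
Your proof is correct; the paper itself offers no argument for this Fact, simply quoting it as a well-known property from Milnor's work, so there is nothing to diverge from. Your elementary verification --- oddness and periodicity of the integrand $\log\abs{2\sin t}$ via the fundamental theorem of calculus, the blow-up of the difference quotient at $\pi\Z$, and the reduction of $\pi$-periodicity to the classical evaluation $\int_0^{\pi/2}\log\sin t\,dt=-\tfrac{\pi}{2}\log 2$ --- is complete and exactly the standard route.
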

Recall that the discrete conformal class of a piecewise flat surface~$(S,V,d)$ is parameterized by the vector space~$\R^V$ (see Proposition~\ref{prop:conformal_classes_parametrisation}), which can be decomposed into Penner cells (see Definition~\ref{def:Penner_sets}). We first define the function~$\E_\Delta$ on each Penner cell~$\mathcal{A}_\Delta$ and then extend its domain to obtain the function~$\E$ on~$\R^V$.
\begin{definition2}\label{def:E_function_Penner_cell}
	Let~$(S,V,d)$ be a piecewise flat surface, and let~$\Delta\in\mathfrak{D}(S,V,d)$. On the Penner cell~$\mathcal{A}_{\Delta}$, the function~$\E_\Delta$ is defined as follows: 
	\begin{align*}
	&\E_{\Delta}:\mathcal{A}_\Delta\to\R,\\
	&\mathbb{E}_{\Delta}(u) = \sum_{ijk\in F_\Delta} \left( 2f\left( \frac{\tilde{\lambda}_{ij}}{2},\frac{\tilde{\lambda}_{jk}}{2},\frac{\tilde{\lambda}_{ki}}{2} \right) - \frac{\pi}{2}(\tilde{\lambda}_{ij}+\tilde{\lambda}_{jk}+\tilde{\lambda}_{ki}) \right) +2\pi \sum_{i\in V} u_i,
	\end{align*}
	where~$\tilde{\lambda}_{ij}$ are the logarithmic lengths of the discrete metric induced by the PL-metric~$d(u)$ on~$\Delta$.
\end{definition2}
\begin{lemma2}\label{lemma:E_function_partial_derivative}
	The partial derivatives of the function~$\E_\Delta$ satisfy the equation
	\begin{equation}\label{eq:E_functional_first_par_derivative}
	\pr{\E_{\Delta}}{u_i}=W_i,
	\end{equation}
	where~$W_i$ is the angle defect at vertex~$i$ of the piecewise flat surface~$(S,V,d(u))$.
\end{lemma2}
\begin{proof} Follows from~\cite[Proposition~4.1.2]{bosa}.
\end{proof}

The functions~$f$ and~$\E_{\Delta}$ have the following properties: \begin{proposition2}[Properties of~$f$ and~$\E_{\Delta}$]\label{prop:properties_E_f}
	The functions~$f$ and~$\E_{\Delta}$ are analytic and locally convex on~$\mathfrak{A}$ and~$\mathcal{A}_\Delta$, respectively. Their second derivatives are positive semidefinite quadratic forms with one-dimensional kernels, spanned by~$(1,1,1)\in \R^3$, $(1,\dots,1)\in \R^V$, respectively. Further,
	\begin{align*}
	f(x+t,y+t,z+t) &= f(x,y,z) + \pi t & \text{for all }(x,y,z)\in \mathfrak{A},\\
	\E_\Delta(u+c(1,\dots,1)) &= \E_\Delta(u) + 2\pi\chi(S)c & \text{for all }u\in \mathcal{A}_\Delta,
	\end{align*}
	where~$\chi(S)$ denotes the Euler characteristic of the surface~$S$.
\end{proposition2}
\begin{proof}
	See~\cite[Equation~(4-5)]{bosa} or~\cite[Propositions~7.2 and~7.7]{bo}.
\end{proof}

\begin{theorem2}[Extension]\label{thm:E_function_extension}
	For a conformal factor~$u\in\R^V$, let~$\Delta(u)$ be a Delaunay triangulation of the surface~$(S,V,d(u))$. The map
	$$\E:\R^V\to \R, \qquad u\mapsto \E_{\Delta(u)}(u),$$
	is well-defined and twice continuously differentiable. Its second derivative is a positive semidefinite quadratic form with one-dimensional kernel, spanned by~$(1,\dots,1)\in \R^V$. Explicitly,
	\begin{align*}
	d^2\mathbb{E}=\frac{1}{4}\sum_{ij\in E}(\cot\alpha_k^{ij} + \cot\alpha_l^{ij})(du_i-du_j)^2.
	\end{align*}
\end{theorem2}
\begin{proof}
	Follows from~\cite[Proposition~4.1.6]{bosa} and~\cite[Section 7 and 8]{bo}. 
\end{proof} 
\paragraph{The function $\mathbf{A_{tot}}$}
The function~$A_{tot}$, whose first partial derivatives correspond to the area of the Voronoi cells, denotes the total area of the surface. We first define the function~$A_{tot}^\Delta$ on each Penner cell~$\mathcal{A}_\Delta$ and then extend its domain to obtain the function~$A_{tot}$ on~$\R^V$.
\begin{definition2}\label{def:area_function_Penner_cell}
	Let~$(S,V,d)$ be a piecewise flat surface, and let~$\Delta\in\mathfrak{D}(S,V,d)$. On the Penner cell~$\mathcal{A}_{\Delta}$, the function~$A_{tot}^\Delta$ is defined as follows:
	$$A_{tot}^\Delta:\mathcal{A}_\Delta\to\R,\qquad A_{tot}^\Delta(u) = \sum_{ijk\in F_\Delta} A_{ijk}(u),$$
	where~$A_{ijk}(u)$ is the area of the triangle with vertices~$i,j,k\in V$ on the piecewise flat surface~$(S,V,d(u))$. 
\end{definition2}
Let us denote the area of the Voronoi cell of a marked point~$i\in V$ by~$A_i.$
\begin{lemma2}\label{lemma:area_function_first_derivative}
	The function~$A_{tot}^\Delta$ is analytic. Its partial derivatives satisfy the equation
	\begin{align}\label{eq:area_functional_first_par_derivative}
	\frac{\partial A_{tot}^\Delta}{\partial u_i} = 2A_i.
	\end{align}
	Its second derivative is
	\begin{align*}
	d^2A_{tot}^\Delta &= \sum_{ij\in E_\Delta}2A_{ij}(du_i+du_j)^2 - \frac{1}{2}\sum_{ij\in E_\Delta}(R_{ijk}^2\cot\alpha_k^{ij}+R_{ijl}^2\cot\alpha_l^{ij})(du_i-du_j)^2,
	\end{align*}
	where the vertices~$k,l\in V$ are the opposite vertices in the neighboring triangles~$ijk,ijl\in F_{\Delta(u)}$, $A_{ij} = \frac{\ell_{ij}^2}{8}\left( \cot \alpha_k^{ij}+ \cot \alpha_l^{ij} \right)$, and~$R_{ijk}$ denotes the radius of the circumcircle of the triangle~$ijk$.
\end{lemma2}
\begin{figure}[htb]
	\labellist
	\small\hair 2pt
	\pinlabel {$i$} [ ] at 34 181
	\pinlabel {$j$} [ ] at 417 124
	\pinlabel {$k$} [ ] at 229 414
	\pinlabel {$\alpha_i^{jk}$} [ ] at 110 209
	\pinlabel {$\alpha_j^{ki}$} [ ] at 330 178
	\pinlabel {$\alpha_k^{ij}$} [ ] at 223 337
	\pinlabel {$A_{jk}^i$} [ ] at 257 266
	\pinlabel {${\ell}_{jk}$} [ ] at 349 274
	\pinlabel {${\ell}_{ij}$} [ ] at 253 118
	\pinlabel {${\ell}_{ki}$} [ ] at 105 300
	\pinlabel {$R_{ijk}$} [ ] at 228 70
	
	\endlabellist
	\centering
	\includegraphics[width=0.3\textwidth]{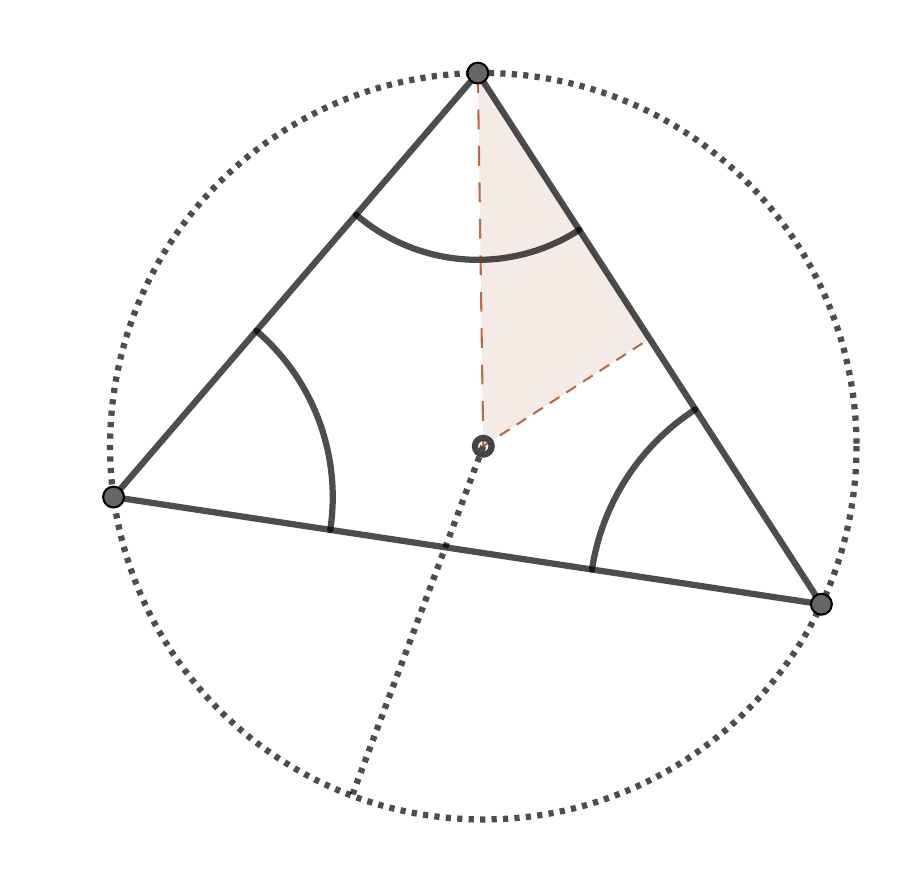}
	\caption{  }
	\label{fig:triangle_derivatives}
\end{figure}
\begin{proof}
	The function~$A_{tot}^\Delta$ is analytic since the area~$A_{ijk}(u)$ of each triangle~$ijk\in F_\Delta$ is an analytic function with respect to the vector of conformal factors $u$\footnote{This follows for example from Heron's formula, and the fact that for all triangles in $F_\Delta, A_{ijk}(u)>0$.}.
	\\\\
	Consider a triangle with vertices~$i,j,$ and~$k$, and let~$A_{jk}^i$ denote the signed area of the triangle with vertices~$k$, the circumcentre of the triangle~$ijk$, and the midpoint of the edge~$jk$, as depicted in Figure~\ref{fig:triangle_derivatives}. The sign of~$A_{jk}^i$ is positive if the circumcentre of~$ijk$ lies inside the triangle, and negative otherwise. Then 
	$$A_{jk}^i = \frac{\ell_{jk}^2}{8}\cot\alpha_i^{jk},$$
	and the area of the Voronoi cell~$V_i$ of a piecewise flat surface~$(S,V,d)$ satisfies the equation
	$$A_{i} = \sum_{jk\mid ijk\in F_\Delta}A^j_{ki} + A^k_{ij} .$$
	Thus,	
	\begin{align*}
	\frac{\partial A_{ijk}}{\partial u_i} =2A^j_{ki} + 2A^k_{ij} - R_{ijk}^2 \frac{\partial}{\partial u_i}(\underbrace{\alpha_i^{jk}+\alpha_j^{ki}+\alpha_k^{ij}}_{=\pi})=2A^j_{ki} + 2A^k_{ij}.
	\end{align*} 
	Due to the linearity of the area function,
	\begin{align*}
	\frac{\partial A_{tot}^\Delta}{\partial u_i} = \sum_{jk\mid ijk\in F_\Delta}2A^j_{ki} + 2A^k_{ij} =2A_i.
	\end{align*}
	In the upcoming calculations we use the following formula from~\cite[Equation~(4-8)]{bosa}.
	\begin{lemma*}\label{lemma:angle_derivatives}
		Let~$a,b,c$ be edge lengths of a triangle, $\alpha,\beta,\gamma$ angles opposite of $a,b,c$, respectively, and let~$\lambda_a,\lambda_b,\lambda_c$ be the logarithmic lengths.
		Then
		\begin{equation*}
		2 d\alpha = (\cot\beta+\cot\gamma)d\lambda_a - \cot\gamma d\lambda_b - \cot\beta d\lambda_c.
		\end{equation*}
	\end{lemma*}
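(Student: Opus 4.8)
The plan is to regard $\alpha$ as a function of the three edge lengths via the law of cosines and differentiate, and only afterwards pass to the logarithmic variables. Starting from
$$\cos\alpha = \frac{b^2+c^2-a^2}{2bc},$$
I would differentiate separately in $a$, $b$, $c$. Differentiating in $a$ gives $-\sin\alpha\,\partial\alpha/\partial a = -a/(bc)$, and the companion derivatives in $b$ and $c$ produce $a\cos\gamma/(bc)$ and $a\cos\beta/(bc)$ after a short simplification using $a^2+b^2-c^2 = 2ab\cos\gamma$ and $a^2+c^2-b^2 = 2ac\cos\beta$. Clearing the factor $\sin\alpha$ by means of the area identities $2A = bc\sin\alpha = ac\sin\beta = ab\sin\gamma$ yields
$$\frac{\partial\alpha}{\partial a} = \frac{a}{2A}, \qquad \frac{\partial\alpha}{\partial b} = -\frac{a\cos\gamma}{2A}, \qquad \frac{\partial\alpha}{\partial c} = -\frac{a\cos\beta}{2A},$$
where $A$ denotes the area of the triangle.

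Next I would change variables to the logarithmic lengths. Since $\lambda_a = 2\log a$, one has $da = \tfrac{a}{2}\,d\lambda_a$, and likewise for $b$ and $c$. Substituting into $d\alpha = \tfrac{\partial\alpha}{\partial a}\,da + \tfrac{\partial\alpha}{\partial b}\,db + \tfrac{\partial\alpha}{\partial c}\,dc$ and multiplying by two gives
$$2\,d\alpha = \frac{a^2}{2A}\,d\lambda_a - \frac{ab\cos\gamma}{2A}\,d\lambda_b - \frac{ac\cos\beta}{2A}\,d\lambda_c.$$
The identities $2A = ab\sin\gamma$ and $2A = ac\sin\beta$ immediately recognize the last two coefficients as $-\cot\gamma$ and $-\cot\beta$, exactly as claimed.

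The only step that requires a small idea is the coefficient of $d\lambda_a$, where I must verify that $a^2/(2A) = \cot\beta + \cot\gamma$. Writing $\cot\beta + \cot\gamma = (ac\cos\beta + ab\cos\gamma)/(2A) = a(c\cos\beta + b\cos\gamma)/(2A)$ and then invoking the projection formula $a = b\cos\gamma + c\cos\beta$ collapses this to $a^2/(2A)$, which closes the argument. This projection identity is the only genuine obstacle; everything else is routine differentiation of the law of cosines together with the three equivalent expressions for the area. An alternative, perhaps slicker, route would be to differentiate the half-angle or tangent formulas directly in the $\lambda$-variables, but the law-of-cosines computation above is the most transparent and makes the appearance of the cotangents explicit.
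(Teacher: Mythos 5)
Your computation is correct: the partial derivatives of $\alpha$ obtained from the law of cosines, the passage to logarithmic variables via $d\lambda_a = \tfrac{2}{a}\,da$ (consistent with the paper's convention $\lambda = 2\log\ell$), the identification of the coefficients of $d\lambda_b$ and $d\lambda_c$ through $2A = ab\sin\gamma = ac\sin\beta$, and the projection formula $a = b\cos\gamma + c\cos\beta$ for the coefficient of $d\lambda_a$ all check out. There is, however, nothing in the paper to compare this against: the lemma is stated without proof and attributed to Bobenko, Pinkall and Springborn (Equation (4-8) of their paper), so your argument supplies a self-contained derivation where the paper only gives a citation. It is the standard elementary route to this identity, and it is complete.
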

	Since 
	\begin{align*}
	\frac{\partial A^j_{ki}}{\partial u_i} &= A^j_{ki}-\frac{1}{2}R_{ijk}^2 \frac{\partial \alpha_j^{ki}}{\partial u_i} =A^j_{ki}-\frac{1}{4}R_{ijk}^2\cot\alpha_k^{ij},
	\end{align*}
	we obtain the equation
	\begin{align*}
	\pr{^2A_{tot}^\Delta}{u_i^2} = 2A_i - \frac{1}{2}\sum_{jk\mid ijk\in F_\Delta}R_{ijk}^2(\cot\alpha_k^{ij} + \cot\alpha_j^{ki}).
	\end{align*}
	
	Let~$i,j\in V$ be two vertices. If~$j$ is not adjacent to~$i$, 
	$$\pr{^2A_{tot}^\Delta}{u_i\partial u_j} =0.$$
	If~$j$ is adjacent to~$i$, let~$k,l\in V$ be the two opposite vertices in the neighboring triangles~$ijk,ijl\in F_\Delta$. Since
	\begin{align*}
	\frac{\partial A^i_{jk}}{\partial u_i} &= -\frac{1}{2}R_{ijk}^2 \frac{\partial \alpha_i^{jk}}{\partial u_i} = \frac{1}{4} R_{ijk}^2(\cot\alpha_j^{ki} + \cot\alpha_k^{ij}),
	\end{align*}
	the mixed partial derivative equals
	$$
	\pr{^2A_{tot}^\Delta}{u_i\partial u_j} = \underbrace{2A^k_{ij}+2 A^l_{ij}}_{=2A_{ij}} + \frac{1}{2}(R^2_{ijk}\cot\alpha_k^{ij} + R^2_{ijl}\cot\alpha_l^{ij}).
	$$
	Thus,
	\begin{align*}
	d^2A_{tot}^\Delta &= \sum_{ij\in E_\Delta}2A_{ij}(du_i+du_j)^2 - \frac{1}{2}\sum_{ij\in E_\Delta}(R_{ijk}^2\cot\alpha_k^{ij}+R_{ijl}^2\cot\alpha_l^{ij})(du_i-du_j)^2.
	\end{align*}
\end{proof}

\begin{theorem2}[Extension]\label{thm:area_function_extension}
	For a conformal factor~$u\in\R^V$, let~$\Delta(u)$ be a Delaunay triangulation of the surface~$(S,V,d(u))$. The map
	$$A_{tot}:\R^V\to \R, \qquad u\mapsto A_{tot}^{\Delta(u)}(u),$$
	is well-defined and once continuously differentiable.
\end{theorem2}
\begin{proof}
	Due to Lemma~\ref{lemma:area_function_first_derivative} the function~$A_{tot}$ is once continuously differentiable in the interior of every Penner cell. At the boundary between two (or more) Penner cells the triangulations induce the same Delaunay tessellation and thus the same Voronoi tessellation. The areas of the Voronoi cells induced by either of the triangulations are therefore equal. 
\end{proof}

\begin{remark2}
	The function~$A_{tot}$ is, in fact, twice continuously differentiable. This can be proved by a long and unilluminating calculation~\cite[Chapter 8]{ja}.
\end{remark2}

\subsection{The variational principles}\label{sec:three_var_principles}
\begin{theorem2}[Variational principle with equality constraints]\label{thm:variational_principle_E+eq_constraint}
	Let~$(S,V,d)$ be a piecewise flat surface. Up to global rescaling, the PL-metrics with constant discrete Gaussian curvature in the conformal class of the metric~$d$ are in one-to-one correspondence with the critical points of the function $$\E:\R^V\to\R, \qquad u\mapsto \E(u),$$ under the constraint~$$A_{tot}(u)=1.$$
\end{theorem2}
\begin{proof}
	We use the method of Lagrange multipliers. A conformal factor~$u\in\R^V$ is a critical point of the function~$\E$ under the constraint~$A_{tot}=1$ if and only if there exists a Lagrange multiplier~$\lambda\in\R$, such that
	$$0 = \pr{(\E-\lambda A_{tot})}{u_i}\overset{\eqref{eq:E_functional_first_par_derivative}, \eqref{eq:area_functional_first_par_derivative}}{=}W_i-2\lambda A_i.$$
	This holds if and only if
	$$\frac{W_i}{A_i}=2\lambda=const.$$
\end{proof}
The Lagrange multiplier~$\lambda$ satisfies
$$\lambda = \pi\chi(S)$$ by the discrete Gauss-Bonnet theorem.
\begin{theorem*}[Discrete Gauss-Bonnet theorem]
	Let~$(S,V,d)$ be a piecewise flat surface with constant discrete Gaussian curvature~$K_{av}$ at every vertex. Denote the total area of the surface by~$A_{tot}$. Then,
	$$K_{av}= \frac{2\pi\chi(S)}{A_{tot}}.$$
\end{theorem*}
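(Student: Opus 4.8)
The plan is to reduce the statement to the classical Gauss--Bonnet identity for piecewise flat surfaces, namely Regge's formula $\sum_{i\in V} W_i = 2\pi\chi(S)$, via a single summation. The point is that constancy of the discrete Gaussian curvature linearises the relationship between angle defects and Voronoi areas, so that once both are summed over the vertices the whole theorem becomes a short bookkeeping computation.

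First I would unwind Definition~\ref{def:disc_Gauss_curvature}: the hypothesis $K_i = K_{av}$ at every vertex means precisely that $W_i = K_{av} A_i$ for all $i\in V$, where $W_i$ is the angle defect and $A_i$ the area of the Voronoi cell. Summing this identity over all vertices gives
$$\sum_{i\in V} W_i = K_{av}\sum_{i\in V} A_i.$$
The right-hand sum is exactly $A_{tot}$, because the Voronoi cells form a tessellation of $(S,V,d)$ (Section~\ref{sec:preliminaries}): their $2$-cells have disjoint interiors and cover $S$ up to the lower-dimensional Voronoi $1$- and $0$-cells, which carry no area. Hence $\sum_{i\in V} A_i = A_{tot}$.

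The remaining ingredient is the classical discrete Gauss--Bonnet identity, which I would establish on any geodesic triangulation~$\Delta$ of the surface by counting angles. Since each face is a Euclidean triangle whose angles sum to~$\pi$, regrouping the angles by the vertex at which they sit yields $\sum_{i\in V}\alpha_i = \pi|F_\Delta|$, so that
$$\sum_{i\in V} W_i = 2\pi|V| - \pi|F_\Delta|.$$
The triangulation relation $3|F_\Delta| = 2|E_\Delta|$, combined with $\chi(S) = |V| - |E_\Delta| + |F_\Delta|$, gives $2\pi\chi(S) = 2\pi|V| - \pi|F_\Delta|$, whence $\sum_{i\in V} W_i = 2\pi\chi(S)$. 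Combining the three displays produces $2\pi\chi(S) = K_{av}\,A_{tot}$, which rearranges to the claimed formula.

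The argument carries no genuine analytic difficulty; it is entirely combinatorial. The only step deserving a moment of care is the assertion that the Voronoi areas sum to $A_{tot}$, i.e.\ that the $1$- and $0$-cells of the Voronoi tessellation contribute no area — this is immediate from their description as unions of geodesic arcs and isolated points in Section~\ref{sec:preliminaries}.
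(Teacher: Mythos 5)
Your proposal is correct and complete; the paper in fact states the discrete Gauss--Bonnet theorem without proof, and your argument is the standard one it implicitly relies on. Summing $W_i=K_{av}A_i$ over $V$, using that the Voronoi $2$-cells tessellate $S$ up to sets of zero area so that $\sum_{i\in V}A_i=A_{tot}$, and establishing $\sum_{i\in V}W_i=2\pi\chi(S)$ by grouping the triangle angles ($\sum_i\alpha_i=\pi\abs{F_\Delta}$) together with $3\abs{F_\Delta}=2\abs{E_\Delta}$ and Euler's formula is exactly the right bookkeeping, and every step checks out.
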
 

\begin{fact}[Alternative variational principle to Theorem~\ref{thm:variational_principle_E+eq_constraint}]\label{fact:alternative_vat_principle}
	Up to global rescaling, the PL-metrics with constant discrete Gaussian curvature in the conformal class of the metric~$d$ are in one-to-one correspondence with the critical points of the function $$\F:\R^V\to\R, \qquad u\mapsto \F(u) = \E(u)-\pi\chi(S)\log(A_{tot}(u)).$$
	Indeed, 
	$$0 = \pr{\F}{u_i}\overset{\eqref{eq:E_functional_first_par_derivative}, \eqref{eq:area_functional_first_par_derivative}}{=}W_i-\frac{2\pi\chi(S)}{A_{tot}} A_i.$$
	This holds if and only if
	$$\frac{W_i}{A_i}=\frac{2\pi\chi(S)}{A_{tot}}.$$
\end{fact}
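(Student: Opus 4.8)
The plan is to reduce the statement to the single gradient computation indicated after the Fact, and then to make the clause ``up to global rescaling'' precise by exploiting the scale invariance of~$\F$. First I would record that $\F$ is well-defined and continuously differentiable on all of~$\R^V$: by Theorem~\ref{thm:E_function_extension} the function~$\E$ is (twice) continuously differentiable, while by Theorem~\ref{thm:area_function_extension} the function~$A_{tot}$ is continuously differentiable and, being a total area, strictly positive, so $\log A_{tot}$ is continuously differentiable as well. Hence $\F=\E-\pi\chi(S)\log A_{tot}$ is~$C^1$, and its partial derivatives may be computed by the chain rule. Using Equation~\eqref{eq:E_functional_first_par_derivative} in the form $\partial\E/\partial u_i=W_i$ and Equation~\eqref{eq:area_functional_first_par_derivative} in the form $\partial A_{tot}/\partial u_i=2A_i$, I obtain
\[
\pr{\F}{u_i}=W_i-\pi\chi(S)\,\frac{1}{A_{tot}}\cdot 2A_i=W_i-\frac{2\pi\chi(S)}{A_{tot}}\,A_i,
\]
exactly the expression displayed in the Fact.

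Next I would characterize the critical points. A conformal factor~$u$ is critical for~$\F$ precisely when all these partial derivatives vanish, that is, when $W_i=\frac{2\pi\chi(S)}{A_{tot}}A_i$, equivalently $K_i=W_i/A_i=\frac{2\pi\chi(S)}{A_{tot}}$ for every~$i\in V$. Since the right-hand side is independent of~$i$, this says exactly that $d(u)$ has constant discrete Gaussian curvature, which gives one direction. For the converse, if $d(u)$ has constant curvature $K_i\equiv K_{av}$, then the discrete Gauss--Bonnet theorem forces $K_{av}=2\pi\chi(S)/A_{tot}$ (because $\sum_i W_i=2\pi\chi(S)$ and the Voronoi cells partition the surface, so $\sum_i A_i=A_{tot}$); substituting this value back into the gradient shows that every partial derivative vanishes. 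Thus $u$ is critical for~$\F$ if and only if $d(u)$ has constant discrete Gaussian curvature.

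Finally I would address the bijection. By Proposition~\ref{prop:conformal_classes_parametrisation} the assignment $u\mapsto d(u)$ identifies~$\R^V$ with the conformal class of~$d$, so it restricts to a bijection between the critical points of~$\F$ and the constant-curvature metrics. The phrase ``up to global rescaling'' is then explained by the scale invariance of~$\F$: global rescaling of a PL-metric by~$e^{c}$ corresponds to $u\mapsto u+c(1,\dots,1)$, under which all edge lengths scale by~$e^{c}$ (Proposition~\ref{prop:equivalence_disc_conformal_fixed_triangulation}), hence all areas by~$e^{2c}$, so $\log A_{tot}$ increases by~$2c$; meanwhile~$\E$ increases by $2\pi\chi(S)c$ (Proposition~\ref{prop:properties_E_f} together with Theorem~\ref{thm:E_function_extension}). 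The two contributions cancel, giving $\F(u+c(1,\dots,1))=\F(u)$. Consequently the critical points of~$\F$ and the constant-curvature metrics both occur in matching one-parameter families along the rescaling direction, and the correspondence descends to a genuine bijection modulo global rescaling. I expect the main obstacle to be not the differentiation, which is routine once the two partial-derivative lemmas are in hand, but the bookkeeping in this last step: one must check the quadratic scaling law for~$A_{tot}$ (not stated directly among the cited results) and confirm that the kernel direction $(1,\dots,1)$ of~$d^2\E$ is exactly the metric-rescaling direction, so that the scale invariance of~$\F$ coincides precisely with the freedom being quotiented out on the metric side.
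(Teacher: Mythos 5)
Your proposal is correct and follows essentially the same route as the paper: the paper's own justification is exactly the displayed chain-rule computation $\pr{\F}{u_i}=W_i-\frac{2\pi\chi(S)}{A_{tot}}A_i$ together with the observation that its vanishing is equivalent to $K_i\equiv\frac{2\pi\chi(S)}{A_{tot}}$. Your additional verification of the scale invariance $\F(u+c(1,\dots,1))=\F(u)$ (via $A_{tot}(u+c(1,\dots,1))=e^{2c}A_{tot}(u)$, which the paper does record in the proof of Proposition~\ref{prop:properties_of_A+-}) is a correct and welcome elaboration of the ``up to global rescaling'' clause that the paper leaves implicit.
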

\begin{theorem2}[Variational principle with inequality constraints]\label{thm:variational_principle_E+inequality_constraints}~\\
	Let~$(S,V,d)$ be a piecewise flat surface with~$\chi(S)\neq 0$. The existence of PL-metrics with constant discrete Gaussian curvature in the conformal class of the metric~$d$ follows from the existence of minima of the function~$\E$ under the following inequality constraints:
	\begin{compactitem}
		\item if the Euler characteristic of~$S$ satisfies~$\chi(S)= 2$, the inequality constraint is
		$$A_{tot}\geq 1,$$ 
		\item if the Euler characteristic of~$S$ satisfies~$\chi(S)<0$, the inequality constraint is
		$$A_{tot}\leq 1.$$ 
	\end{compactitem}
\end{theorem2}
\begin{proof}
	Proposition~\ref{prop:max_attained_at_boundary} shows that if~$u\in\R^V$ is a minimum of the function~$\E$ under one of these constraints, then~$A_{tot}(u) = 1$. Since a minimum is a critical point, the claim follows from Theorem~\ref{thm:variational_principle_E+eq_constraint}.
\end{proof}

\begin{proposition2}\label{prop:properties_of_A+-}
	The sets
	$$\mathcal{A}_+ = \{u\in\R^V\mid A_{tot}(u)\geq 1\},\qquad\mathcal{A}_- = \{u\in\R^V\mid A_{tot}(u)\leq 1\},$$
	have the following properties:
	\begin{compactenum}[a)]
		\item 	The sets~$\mathcal{A}_+$ and~$\mathcal{A}_-$ are closed subsets of~$\R^V$.
		\item Let $\1 = (1,\dots,1)\in \R^V$, and let~$u\in \R^V$ be a conformal factor. Then the rays
		$$\mathcal{R}^+_u = \left\{u+c\:\1\mid c\geq -\frac{1}{2}\log A_{tot}(u)\right\},~\mathcal{R}^-_u =\left\{u+c\:\1\mid c\leq -\frac{1}{2}\log A_{tot}(u)\right\}$$
		are completely contained in the sets~$\mathcal{A}_+$ and~$\mathcal{A}_-$, respectively. The sets~$\mathcal{A}_+$ and~$\mathcal{A}_-$ are thus unbounded.
	\end{compactenum}
\end{proposition2}

\begin{proof}
	\begin{compactenum}[a)]
		\item The proof follows from the fact that the sets~$\mathcal{A}_+$ and~$\mathcal{A}_-$ satisfy the equation
		$$\mathcal{A}_+ = A_{tot}^{-1}([1,\infty)),\qquad\mathcal{A}_- = A_{tot}^{-1}([0,1]). $$
		\item The statement follows from the fact that
		$$A_{tot}(u+c\:\1)= A_{tot}(u)\exp(2c).$$
	\end{compactenum}
\end{proof}

\begin{proposition2}\label{prop:max_attained_at_boundary}
	Let~$(S,V,d)$ be a piecewise flat surface. If
	\begin{compactitem}
		\item the Euler characteristic of~$S$ satisfies $\chi(S)=2$ and the function~$\E$ attains a minimum in the set~$\mathcal{A}_+$, or
		\item the Euler characteristic of~$S$ satisfies $\chi(S)<0$ and the function~$\E$ attains a minimum in the set~$\mathcal{A}_-$,
	\end{compactitem} 
	the minimum lies at the boundary of the sets,
	$$\partial\mathcal{A}_+=\partial\mathcal{A}_- = \left\{  u\in\R^V\mid A_{tot}(u)= 1\right\}.$$
\end{proposition2}

\begin{proof}
	Let~$\chi(S)=2$ and let~$u\in \mathcal{A}_+$ be a minimum of the function~$\E$ in~$\mathcal{A}_+$. We show that~$A_{tot}(u)= 1$. Let
	$$c= -\frac{1}{2}\log A_{tot}(u).$$
	Since~$A_{tot}(u)\geq 1$, we know that~$c\leq 0$. Further, $u+c\:\1\in \mathcal{A}_+$ due to Proposition~\ref{prop:properties_of_A+-}. Due to the additive property of the function~$\E$ (Proposition~\ref{prop:properties_E_f}),
	\begin{align*}
	\E(u)\leq \E(u+c\:\1) = \E(u) + 2\chi(S)\pi c \Longrightarrow c\geq 0.
	\end{align*}
	This implies that $c=0$, and thus $ A_{tot}(u)=1$.	\\\\
	For surfaces with~$\chi(S)<0$ the proof is analogous.
\end{proof}

\section{Existence of metrics with constant Gaussian curvature}\label{sec:existence}

In this section we prove Theorem~\ref{thm:main}. We build the proof on several key observations of the behaviour of a sequence~$(u_n)_{n\in\N}$ of conformal factors in~$\R^V$. These observations are central for the application of Theorem~\ref{thm:clas_calculus}, from which the proof of Theorem~\ref{thm:main} follows almost immediately.\\\\
In Section~\ref{sec:pf_thm_main} we reduce the proof of Theorem~\ref{thm:main} to the proofs of Theorem~\ref{thm:divergence_hyperbolic_case} and Theorem~\ref{thm:divergence_spherical_case}. In Section~\ref{sec:existence_technicalities} we study the behaviour of sequences of conformal factors. Finally, in Section~\ref{sec:existence_proofs_thms} we prove Theorem~\ref{thm:divergence_hyperbolic_case} and Theorem~\ref{thm:divergence_spherical_case}.
\subsection{Reduction to Theorem~\ref{thm:divergence_hyperbolic_case} and Theorem~\ref{thm:divergence_spherical_case}}\label{sec:pf_thm_main}

To prove Theorem~\ref{thm:main} we distinguish three cases, corresponding to the three geometries: the spherical case (genus 0, $\chi(S)=2$), the Euclidean case (genus 1, $\chi(S)=0$), and the hyperbolic case (genus $\geq 2$, $\chi(S)<0$).

In the Euclidean case ($\chi(S)=0$) the Yamabe problem is equivalent to the discrete uniformization problem. Theorem~\ref{thm:main} thus follows directly from \cite[Theorem 1.2]{guluo1} and \cite[Theorem 11.1]{bo}.

In the other two cases~($\chi(S)<0$ and~$\chi(S)=2$) finding metrics with constant Gaussian curvature is equivalent to finding the minima of the function~$\E$ in the set~$\mathcal{A}_-$ if~$\chi(S) <0$, and in the set~$\mathcal{A}_+$ if~$\chi(S) = 2$. This is due to Theorem~\ref{thm:variational_principle_E+inequality_constraints}. To prove the existence of these minima we apply Theorem~\ref{thm:clas_calculus} --- a traditional theorem from calculus.
\begin{theorem2}\label{thm:clas_calculus}
	Let~$A\subseteq \R^m$ be a closed set and let~$f:A\to \R$ be a continuous function. If every unbounded sequence~$(x_n)_{n\in \N}$ in~$A$ has a subsequence~$(x_{n_k})_{k\in\N}$ such that $$\lim_{k\to\infty}f(x_{n_k}) = +\infty,$$
	then~$f$ attains a minimum in~$A$.
\end{theorem2}
We already verified that the majority of the conditions of Theorem~\ref{thm:clas_calculus} is satisfied. Proposition~\ref{prop:properties_of_A+-} ensures that the sets~$\mathcal{A}_+$ and~$\mathcal{A}_-$ are closed. Theorem~\ref{thm:E_function_extension} tells us that the function~$\E$ is continuous. To obtain the minima of~$\E$ in the sets~$\mathcal{A}_+$ and~$\mathcal{A}_-$, the following two theorems are left to prove.			\begin{theorem2}\label{thm:divergence_hyperbolic_case}
	Let $\chi(S)<0$ and let $(u_n)_{n\in \N}$ be an unbounded sequence in $\mathcal{A}_-$. Then there exists a subsequence $(u_{n_k})_{k\in\N}$ of $(u_n)_{n\in \N}$, such that
	$$\lim_{k\to\infty}\E(u_{n_k}) = +\infty.$$
\end{theorem2}

\begin{theorem2}\label{thm:divergence_spherical_case}
	Let $\chi(S)=2$ and let $(u_n)_{n\in \N}$ be an unbounded sequence in $\mathcal{A}_+$. Then there exists a subsequence $(u_{n_k})_{k\in\N}$ of $(u_n)_{n\in \N}$, such that
	$$\lim_{k\to\infty}\E(u_{n_k}) = +\infty.$$
\end{theorem2}

\subsection{Behaviour of sequences of conformal factors}\label{sec:existence_technicalities}
Fix a piecewise flat surface~$(S,V,d)$ and let~$(u_n)_{n\in\N}$ be an unbounded sequence in its discrete conformal class~$\R^V$. We denote its coordinate sequence at vertex~$j\in V$ by~$(u_{j,n})_{n\in\N}$.
\begin{tcolorbox}
	\begin{convention}\label{convention}
		Throughout this section we assume that the sequence~$(u_n)_{n\in\N}$ possesses the following properties:
		\\~
		\begin{compactitem}
			\item It lies in one Penner cell~$\mathcal{A}_\Delta$ of~$\R^V$.
			\item There exists a vertex~$i^*\in V$ such that for all~$j\in V$ and~$n\in\N$ $u_{i^*,n}\leq u_{j,n}$.
			\item Each coordinate sequence $(u_{j,n})_{n\in\N}$ either converges, diverges properly to~$+\infty$, or diverges properly to~$-\infty$.
			\item For all~$j\in V$ the sequences~$(u_{j,n}-u_{i^*,n})_{n\in\N}$ either converge or diverge properly to~$+\infty$.
		\end{compactitem}
	\end{convention}
\end{tcolorbox}
We may adopt Convention~\ref{convention} without loss of generality because every sequence in~$\R^V$ possesses a subsequence that satisfies these properties. The first property follows from a theorem by Hirotaka Akiyoshi.
\begin{theorem*}[Hirotaka Akiyoshi~\cite{akiyoshi}]
	The set~$\mathfrak{D}(S,V,d)$ of non-empty Penner cells is finite.
\end{theorem*}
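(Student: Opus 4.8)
The plan is to translate the statement into the decorated hyperbolic picture and bound the number of ideal Delaunay tessellations arising over the decoration space, using Penner's convex-hull construction in Minkowski space. First I would reduce from triangulations to tessellations. By Theorem~\ref{thm:ex_and_uniq_ideal_Del_decomposition}, every conformal factor $u\in\R^V$ determines a \emph{unique} ideal Delaunay tessellation of $(S,V,d_{hyp},\mathcal{H}(u))$, and by Theorem~\ref{thm:ideal_Del_and_Del_triangulations} a triangulation $\Delta$ has non-empty Penner cell $\mathcal{A}_\Delta$ exactly when it refines the Delaunay tessellation of some $u$. Since a tessellation has only finitely many (non-triangular) faces and each such face admits only finitely many triangulations by diagonals, it suffices to show that \emph{finitely many tessellations occur} as $u$ ranges over $\R^V$. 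I would also record that the decomposition of $\R^V$ into the sets $\mathcal{A}_\Delta$ is invariant under translation by $\1=(1,\dots,1)$: a uniform shift $u\mapsto u+c\,\1$ scales the metric $d(u)$ globally and hence preserves the Delaunay property. The counting may therefore be carried out in the quotient $\R^V/\R\1$, of dimension $\abs{V}-1$.

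Next I would move to the Minkowski model. Fixing the hyperbolic structure $d_{hyp}$ and lifting $(S,V,d_{hyp})$ to $H^2\subset\R^{2,1}$, each decorated cusp is represented by a light-cone vector, and Penner's identity relates the signed horocycle distance to the Lorentzian inner product of these vectors. Because the conformal action on Penner coordinates is the affine shift $\lambda_{ij}(u)=\lambda_{ij}+u_i+u_j$ (immediate from Proposition~\ref{prop:equivalence_disc_conformal_fixed_triangulation} together with $\lambda=2\log\ell$), it corresponds to scaling each light-cone vector over the cusp $i$ by $e^{u_i}$. The Delaunay tessellation of $d(u)$ is precisely the radial projection to $H^2$ of the faces of the convex hull of this family of scaled light-cone points, which is invariant under the deck group $\Gamma$ of the cover $H^2\to(S,V,d_{hyp})$. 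The task thus becomes to show that the combinatorial type of this convex hull takes only finitely many values as the scaling vector $u$ varies.

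I would then combine a local-finiteness statement with a control-at-infinity statement. For local finiteness, the cells $\mathcal{A}_\Delta$ are cut out of $\R^V$ by the Delaunay inequalities attached to the edges (as in Fact~\ref{fact:Delaunay_equivalence} and its hyperbolic analogue), which, up to the action of $\Gamma$, are finitely many real-analytic conditions; this yields a locally finite decomposition, so every compact subset of $\R^V/\R\1$ meets only finitely many cells. The substantive part is the behaviour at infinity: along a ray $u=u_0+s\,w$ with $s\to+\infty$ in $\R^V/\R\1$, I would show that the Delaunay tessellation is eventually constant. Ordering the vertices by the growth rates of their coordinates $w_i$, one analyses how the convex hull degenerates as some horocycles shrink and others expand without bound, arguing by a domination/stratification scheme that the faces of the hull stabilise. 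A compactness argument on the sphere of directions in $\R^V/\R\1$, together with the local finiteness above, then forces the total number of tessellations to be finite.

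I expect the control-at-infinity step to be the main obstacle: proving that the convex-hull combinatorics genuinely stabilises along every degenerating direction, and that finitely many ``stable at infinity'' types cover all directions, requires a delicate quantitative analysis of the Minkowski convex hull under unbounded anisotropic rescaling. This is exactly the content established by Akiyoshi~\cite{akiyoshi}, so in the paper I would invoke that result rather than reproduce the estimates.
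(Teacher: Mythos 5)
Your proposal ends exactly where the paper does: the statement is quoted as an external result of Akiyoshi, with no proof supplied in the text, and you likewise defer the substantive finiteness argument to \cite{akiyoshi}. The preliminary reductions you describe (refining tessellations to triangulations, invariance under $u\mapsto u+c\,\1$, the affine action $\lambda_{ij}\mapsto\lambda_{ij}+u_i+u_j$, and the Epstein--Penner convex-hull picture) are correct and faithfully reflect the strategy behind the cited theorem, so your treatment is consistent with the paper's.
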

In addition, we use the following notation:
\begin{align} \label{eq:ells}
\ell_{ij}^n:=\ell_{ij}\exp\left(\frac{u_{i,n}+u_{j,n}}{2}\right),
\end{align}
where~$\ell$ is the discrete metric induced by the PL-metric~$d$ on~$(S,V,\Delta)$ (see Fact~\ref{fact:discrete_metrics_induce_PL-metrics}).
Since the sequence~$(u_n)_{n\in\N}$ lies inside the Penner cell~$\mathcal{A}_\Delta$, $\Delta$ is a Delaunay triangulation of~$(S,V,d(u_n))$ for all~$n\in\N$ (see Definition~\ref{def:Penner_sets}). Furthermore, the map~$\ell^n$ defined by Equation~\eqref{eq:ells} is the discrete metric induced on~$(S,V,\Delta)$ by the PL-metric~$d(u_n)$ (see Proposition~\ref{prop:equivalence_disc_conformal_fixed_triangulation}).

\paragraph{Behaviour of~$\mathbf{(u_n)_{n\in\N}}$ in one triangle}
Consider a triangle in~$F_\Delta$ with vertices labeled by~$1,2,3\in V$ and initial edge lengths~$\ell_{12},\ell_{23},\ell_{31}$, uniquely determined by~$d$. Define
\begin{align}\label{eq:A_123}
\mathcal{A}_{123} := \{ (u_1,u_2,u_3) \mid u\in \mathcal{A}_\Delta \}.
\end{align}
\begin{figure}[h!]
	\labellist
	\small\hair 2pt
	\pinlabel {$1$} [ ] at 48 81
	\pinlabel {$2$} [ ] at 532 21
	\pinlabel {$3$} [ ] at 329 363
	\pinlabel {$\alpha_n$} [ ] at 143 115
	\pinlabel {$\ell_{12}^n $} [ ] at 281 20
	\pinlabel {$\ell_{31}^n$} [ ] at 160 261
	\pinlabel {$\ell_{23}^n$} [ ] at 430 220
	\endlabellist
	\centering
	\includegraphics[width=0.3\textwidth]{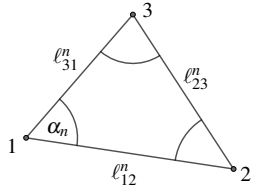}
	\caption{Sequences of edge lengths on triangle 123. }
	\label{fig:f_function_triangle1}
\end{figure}

Let~$(u_{1,n},u_{2,n},u_{3,n})_{n\in\N}$ be a sequence in~$\mathcal{A}_{123}$. Then the edge lengths~$\ell_{12}^n, \ell_{23}^n,\ell_{31}^n$ satisfy the triangle inequalities for all~$n\in\N$.
\begin{lemma2}\label{lemma:triangle_+infty}
	If~$u_{1,n}\xrightarrow{n\to\infty}\infty, u_{2,n}\xrightarrow{n\to\infty}\infty$ and the sequence~$(u_{3,n})_{n\in\N}$ is bounded from above,
	there exists an~$n\in \N$ such that
	$$\ell_{12}^n> \ell_{23}^n+\ell_{31}^n.$$
	In other words, there exists \textbf{\underline{no}} sequence in~$\mathcal{A}_{123}$ where two of the coordinate sequences would diverge properly to~$+\infty$ and the third one would be bounded from above.
\end{lemma2}

\begin{proof}
	Without loss of generality we may assume that~$u_{1,n} \leq u_{2,n}$ for all~$n\in \N$. Then
	\begin{align*}
	0&<\ell_{12} = \exp\left( \frac{-u_{1,{n}}-u_{2,{n}}}{2} \right)\ell_{12}^{n}\\
	&\overset{\Delta\text{-ineq.}}{\leq} \exp\left( \frac{-u_{1,{n}}-u_{2,{n}}}{2}\right)(\ell_{23}^{n}+\ell_{31}^{n})\\
	&=\exp\left( \frac{u_{3,{n}}-u_{1,{n}}}{2}\right)\left(\ell_{23}+\ell_{31}\underbrace{\exp\left( \frac{u_{1,{n}}-u_{2,{n}}}{2}\right)}_{\leq 1}\right)\\
	&\leq \exp\left(\frac{u_{3,{n}}-u_{1,{n}}}{2}\right)(\ell_{23}+\ell_{31})\xrightarrow{{n} \to \infty}0.
	\end{align*}
	This contradicts the triangle inequality
	$$\ell_{12}^n\leq \ell_{23}^n+\ell_{31}^n.$$
\end{proof}

We now make a subtle shift of perspective --- instead of studying the development of triangles under sequences of conformal factors~$(u_{i,n})_{n\in\N}$, we consider their development under the sequences~$(u_{i,n}-u_{i^*,n})_{n\in\N}$. Geometrically, this corresponds to the rescaling of the whole triangulation by a factor~$\exp{(-u_{i^*,n})}$ at each step~$n$.

Since we are primarily interested in the conditions under which the triangle inequalities break (such as those in Lemma~\ref{lemma:triangle_+infty}), this shift is an elegant way to reduce the number of cases. Indeed, a triangle with conformal factors~$(2n, 2n, n)_{n\in\N}$ will degenerate just as the triangle with conformal factors~$(n, n, 0)_{n\in\N}$ will, since the triangles are similar.

Lemma~\ref{lemma:triangle_+infty} yields the following key observation:
\begin{corollary2}\label{cor:two_vertices_converge}
	At every triangle~$ijk\in F_\Delta$, at least two of the three sequences~$(u_{i,n}-u_{i^*,n})_{n\in\N}$,~$(u_{j,n}-u_{i^*,n})_{n\in\N}$,~$(u_{k,n}-u_{i^*,n})_{n\in\N}$ converge.
\end{corollary2}
\begin{proof}
	The claim holds for any triangle with vertex~$i^*$ due to Lemma~\ref{lemma:triangle_+infty}. It holds for all remaining triangles in~$ F_\Delta$ due to the connectivity of the triangulation. 
\end{proof}
\begin{lemma2}\label{lemma:triangle_one_vertex_+infty}
	Assume that the sequence~$(u_{1,{n}})_{n\in\N}$ diverges properly to~$+\infty$ and the sequences~$(u_{2,{n}})_{n\in\N}$ and~$(u_{3,{n}})_{n\in\N}$ converge.
	Then 	$$\frac{\ell_{12}^{n}}{\ell_{31}^{n}}\xrightarrow{n\to\infty} 1,$$
	and the sequence of angles~$\alpha_n$, opposite to the edge~$23$ in the triangle with edge lengths~$\ell_{12}^n,\ell_{23}^n,\ell_{31}^n$, satisfies
	$$\alpha_n\xrightarrow{n\to\infty} 0.$$
\end{lemma2}

\begin{proof}
	Dividing both sides of the triangle inequality~$\ell_{31}^n \leq \ell_{23}^n + \ell_{12}^n$ by~$\ell_{31}^n$ yields the inequality
	\begin{align*}
	&1 \leq \frac{\ell_{23}^n}{\ell_{31}^n} + \frac{\ell_{12}^n}{\ell_{31}^n} = \frac{\ell_{23}}{\ell_{31}} \exp\left(\frac{1}{2}(u_{2,n}-u_{1,n})\right)+
	\frac{\ell_{12}^n}{\ell_{31}^n}.
	\end{align*}
	Dividing both sides of the triangle inequality~$\ell_{12}^n \leq \ell_{23}^n + \ell_{31}^n$ by~$\ell_{12}^n$ yields the inequality
	\begin{align*}
	&1 \leq \frac{\ell_{23}}{\ell_{12}} \exp\left(\frac{1}{2}(u_{3,n}-u_{1,n})\right)+
	\frac{\ell_{31}^n}{\ell_{12}^n}.
	\end{align*}
	Since, for~$i=2,3,
	\exp\left(\frac{1}{2}(u_{i,n}-u_{1,n})\right)\xrightarrow[]{n\to\infty} 0,$
	we obtain
	$$\frac{\ell_{23}^{n}}{\ell_{31}^{n}}\xrightarrow[]{n\to\infty} 0,\qquad \frac{\ell_{23}^{n}}{\ell_{12}^{n}}\xrightarrow[]{n\to\infty} 0.$$
	The convergence of the fraction~$\frac{\ell_{12}^n}{\ell_{31}^n}$ follows from the inequalities
	$$1 \leq \lim_{n\to\infty}\frac{\ell_{12}^n}{\ell_{31}^n}\leq 1.$$
	From the cosine rule we obtain the convergence
	\begin{align*}
	2\cos\alpha_n = \frac{\ell_{12}^n}{\ell_{31}^n} + \frac{\ell_{31}^n}{\ell_{12}^n} - \frac{(\ell_{23}^n)^2}{\ell_{31}^n\ell_{12}^n}\xrightarrow[]{n\to\infty}2,
	\end{align*}
	and thus $\alpha_n \xrightarrow[]{n\to\infty} 0.$
\end{proof}

\paragraph{Behaviour of~$\mathbf{(u_n)_{n\in\N}}$ around a vertex star}
Let~$i\in V$ be a vertex such that the sequence~$(u_{i,n})_{n\in\N}$ diverges properly to~$+\infty$ and the sequences~$(u_{j,n})_{n\in\N}$ at any neighbour~$j\in V$ converge. We investigate the behaviour of angles in triangles with vertex~$i$.

\begin{figure}[htb]
	\labellist
	\small\hair 2pt
	\pinlabel {$0$} [ ] at 278 330
	\pinlabel {$1$} [ ] at 423 497
	\pinlabel {$j$} [ ] at 161 0
	\pinlabel {$j+1$} [ ] at 462 22
	\pinlabel {$2$} [ ] at 179 442
	\pinlabel {$s$} [ ] at 602 306

	\pinlabel {$\alpha_j^{0,j+1}$} [ ] at 208 37
	\pinlabel {$\alpha_{j+1}^{0,j}$} [ ] at 395 73
	\pinlabel {$\alpha_0^{j,j+1}$} [ ] at 286 214
	\pinlabel {$\ell_{0,j}^n$} [ ] at 178 159
	\pinlabel {$\ell_{0,j+1}^n$} [ ] at 402 195
	\endlabellist
	\centering
	\includegraphics[width=0.6\textwidth]{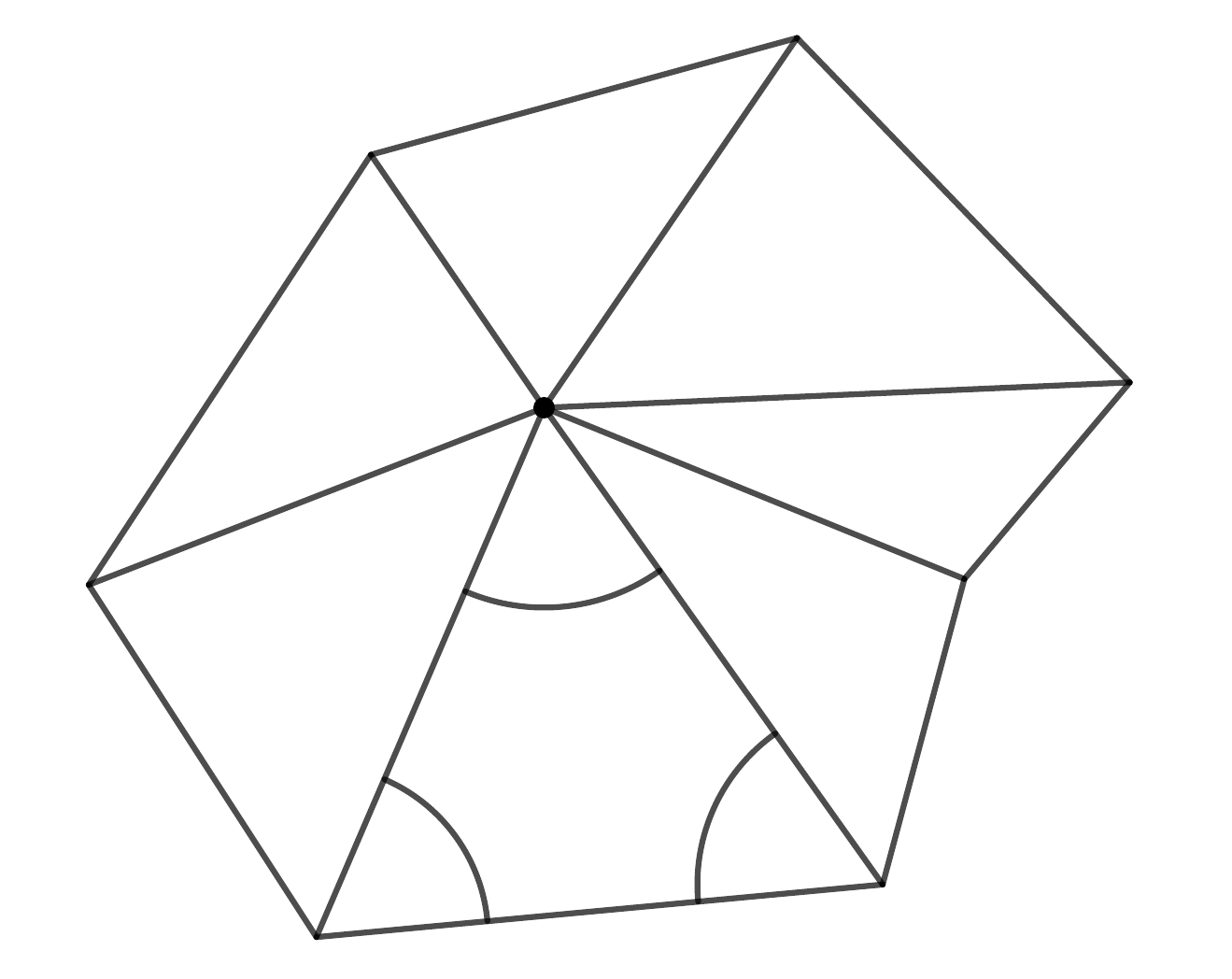}
	\caption{ Labeling in a vertex star. }
	\label{fig:angles_vertex_star}
\end{figure}

A \textbf{vertex star} around vertex~$i$ is the subset~$F_\Delta^{i}\subseteq 
F_\Delta$ of triangles in~$F_\Delta$ that contain the vertex~$i$. We denote the degree of the vertex $i$ by~$s$ and label the vertices as in Figure~\ref{fig:angles_vertex_star}. We drop the index~$n$ that denotes the elements in the sequence when we label angles. Whenever the labeling requires it we use the conventions~$1-1 = s$ and $s+1 = 1$.

\begin{proposition2}\label{prop:area_vertex_star_bounded}
	The sequences of angles in the triangles of~$F_\Delta^i$ satisfy
	$$\lim_{n\to \infty}\alpha^{j,j+1}_0 = 0,\qquad \lim_{n\to \infty}\alpha^{0,j}_{j+1} = \lim_{n\to \infty}\alpha^{0,j+1}_j =\pi/2, \qquad j\in\{1,\dots,s\}.$$
\end{proposition2}
\begin{proof}
	Denote the limit of a sequence of angles $\alpha^{i,j}_k$ along $(u_n)_{n\in\N}$ by $\bar{\alpha}^{i,j}_k$. Due to Lemma~\ref{lemma:triangle_one_vertex_+infty}, 
	$$\bar{\alpha}_0^{j,j+1} = 0,$$
	and thus, for all~$j=1,\dots,s$,
	\begin{align}\label{eq:angle_sum}
	\bar{\alpha}_j^{0,j+1}+ \bar{\alpha}_{j+1}^{0,j}= \pi.
	\end{align}
	
	Since the edges $0j$ are Delaunay, the Delaunay inequality
	\begin{align}\label{eq:Delaunay_ineq_eq}
	\bar{\alpha}_{j-1}^{0,j}+ \bar{\alpha}_{j+1}^{0,j}\leq \pi
	\end{align}
	is satisfied for each~$j \in\{1,\dots,s\}$. 
	Summing up the Delaunay inequalities we obtain
	\begin{align*}
	\pi s\overset{\eqref{eq:Delaunay_ineq_eq}}{\geq} \sum_{j=1}^s (\bar{\alpha}_{j-1}^{0,j}+ \bar{\alpha}_{j+1}^{0,j}) = \sum_{j=1}^s (\bar{\alpha}_{j}^{0,j+1}+ \bar{\alpha}_{j+1}^{0,j}) \overset{\eqref{eq:angle_sum}}{=} \pi s.
	\end{align*}
	In other words, each Delaunay inequality~\eqref{eq:Delaunay_ineq_eq} becomes an \underline{equality} in the limit. Due to equality~\eqref{eq:angle_sum},
	\begin{align*}
	\bar{\alpha}_{j-1}^{0,j} = \bar{\alpha}_{1}^{0,2}, \qquad \bar{\alpha}_{j}^{0,j-1} = \pi-\bar{\alpha}_{1}^{0,2},
	\end{align*}
	for all $j\in\{1,\dots,s\}$.\\\\
	To show that~$\bar{\alpha}_{1}^{0,2}=\pi/2$, we apply the following equation:\\\\
	In a triangle with sides~$a,b,c$, and opposite angles~$\alpha,\beta,\gamma$,
	\begin{align}\label{eq:edge_difference}
	b-a = c\frac{\sin\left(\frac{\alpha-\beta}{2}\right)}{\cos\left(\frac{\gamma}{2}\right)}.
	\end{align}
	
	\par
	Denote the limit of the lengths of edges~$\ell_{j,j+1}^n$ by~$\lim_{n\to\infty}\ell_{j,j+1}^n = \bar{\ell}_{j,j+1}.$
	Since, for all~$n\in\N$, holds
	$$\sum_{j=1}^s \left(\ell_{0,j+1}^n - \ell_{0,j}^n\right) = 0,$$
	in the limit
	\begin{align*}
	0=\lim_{n\to\infty}\sum_{j=1}^s \left(\ell_{0,j+1}^n - \ell_{0,j}^n\right) \overset{\eqref{eq:edge_difference}}{=} \sin\left(\frac{\pi-2\bar{\alpha}_{1}^{0,2}}{2}\right)\sum_{j=1}^s \bar{\ell}_{j,j+1}.
	\end{align*}
	Since, for all~$j=1,\dots,s$, the sequences of conformal factors~$(u_{j,n})_{n\in\N}$ converge,
	$$\sum_{j=1}^s \bar{\ell}_{j,j+1}>0.$$
	We deduce that
	$$\sin\left(\frac{\pi-2\bar{\alpha}_{1}^{0,2}}{2}\right)=0,$$
	and thus $\bar{\alpha}_{1}^{0,2} = \pi/2.$	
\end{proof}

\paragraph{Behaviour of the function~$\E$ along~$\mathbf{(u_n)_{n\in\N}}$}\label{sec:existence_behaviour_of_E_along_sequences}

Recall the definitions of the function~$f$ (Definition~\ref{def:the_function_f}) and the set~$\mathcal{A}_{123}$ (Equation~\eqref{eq:A_123}). Let
$$h:\mathcal{A}_{123}\to \R,\qquad h(u_1,u_2,u_3) := 2f\left( \frac{\tilde{\lambda}_{12}}{2},\frac{\tilde{\lambda}_{23}}{2},\frac{\tilde{\lambda}_{31}}{2} \right) - \frac{\pi}{2}(\tilde{\lambda}_{12}+\tilde{\lambda}_{23}+\tilde{\lambda}_{31}).$$

\begin{lemma2}\label{lemma:h-scaling} For any real number~$v\in \R$, the function~$h$ satisfies the equation
	$$h((u_1,u_2, u_3) + v(1,1,1)) = h(u_1,u_2,u_3)-\pi v.$$
\end{lemma2}
\begin{proof}
	Follows from the property of the function $f$ from Proposition~\ref{prop:properties_E_f}.
\end{proof}

\begin{proposition2}\label{prop:behaviour_of_h}
	Let~$(u_{1,n},u_{2,n},u_{3,n})_{n\in \N}$ be a sequence in~$\mathcal{A}_{123}$. Suppose that 
	$$u_{1,n}\xrightarrow{n\to\infty}+\infty, \qquad u_{2,n}\xrightarrow{n\to\infty} \overline{u_2},\qquad u_{3,n}\xrightarrow{n\to\infty} \overline{u_3}.$$
	Then the sequence~$(h(u_{1,n},u_{2,n},u_{3,n}))_{n\in \N}$ converges, and in particular
	$$
	\lim_{n\to\infty} h(u_{1,n},u_{2,n},u_{3,n}) =-\pi \left(\log\ell_{23}+\frac{1}{2}(\overline{u_2}+ \overline{u_3})\right).
	$$
\end{proposition2}
\begin{proof}
	\begin{figure}[h!]
		\labellist
		\small\hair 2pt
		\pinlabel {$1$} [ ] at 48 81
		\pinlabel {$2$} [ ] at 532 21
		\pinlabel {$3$} [ ] at 329 363
		\pinlabel {$\alpha_n$} [ ] at 133 120
		\pinlabel {$\beta_n$} [ ] at 433 79
		\pinlabel {$\gamma_n$} [ ] at 291 295
		\pinlabel {\rotatebox{352}{$\ell_{12}^n = \exp(z_n)$}} [ ] at 261 40
		\pinlabel {\rotatebox{50}{$\ell_{31}^n = \exp(y_n)$}} [ ] at 180 251
		\pinlabel {\rotatebox[]{305}{$\ell_{23}^n = \exp(x_n)$}} [ ] at 420 220
		\endlabellist
		\centering
		\includegraphics[width=0.4\textwidth]{figs/triangle_existence_proof}
		\caption{ }
		\label{fig:f_function_triangle}
	\end{figure}
	Consider the notation as in Figure~\ref{fig:f_function_triangle}.
	Then,
	\begin{align*}
	\frac{1}{2}h(u_{1,n},u_{2,n},u_{3,n}) = &\alpha_n x_n + \beta_n y_n + \gamma_n z_n + \LobL(\alpha_n) + \LobL(\beta_n)+\LobL(\gamma_n)\\
	&\qquad- \frac{\pi}{2}(x_n+y_n+z_n).
	\end{align*}
	In the limit, the sequences~$(x_n)_{n\in\N},(y_n)_{n\in\N}$ and~$(z_n)_{n\in\N}$ of edge lengths satisfy		
	$$\lim_{n\to\infty} x_n =  \log\ell_{23}+\frac{1}{2}(\overline{u_2}+ \overline{u_3})=:\overline{x},\qquad \lim_{n\to\infty} y_n = +\infty, \qquad \lim_{n\to\infty} z_n = +\infty,$$
	and, due to Proposition~\ref{prop:area_vertex_star_bounded},
	$$\lim_{n\to\infty}(\alpha_n,\beta_n, \gamma_n) = \left( 0,\frac{\pi}{2}, \frac{\pi}{2}\right).$$
	Thus,
	$$\lim_{n\to\infty}\alpha_n x_n =0,$$
	and, since the Lobachevsky function is continuous and satisfies the equality~$\LobL(0)=\LobL\left(\frac{\pi}{2}\right)= 0$
	(see Fact~\ref{fact:lobachevsky_function}), in the limit we obtain
	$$\lim_{n\to\infty}(\LobL(\alpha_n) + \LobL(\beta_n)+\LobL(\gamma_n))=0.$$
	In summary,
	\begin{align*}
	\lim_{n\to\infty}h(u_{1,n},u_{2,n},u_{3,n}) = 2\lim_{n\to\infty} \left[\left(\beta_n - \frac{\pi}{2}\right)y_n+\left(\gamma_n - \frac{\pi}{2}\right)z_n  \right] - \pi \overline{x}.
	\end{align*}
	We rearrange the expression $\left(\beta_n - \frac{\pi}{2}\right)y_n+\left(\gamma_n - \frac{\pi}{2}\right)z_n$ to obtain
	$$\left(\beta_n - \frac{\pi}{2}\right)y_n+\left(\gamma_n - \frac{\pi}{2}\right)z_n = -\frac{1}{2}\alpha_n(y_n+z_n) + \frac{1}{2}(\beta_n-\gamma_n)(y_n-z_n).$$
	In the limit, $\lim_{n\to\infty}(\beta_n-\gamma_n) = 0$ due to Proposition~\ref{prop:area_vertex_star_bounded},
	and
	\begin{align*}
	\lim_{n\to\infty}(y_n-z_n) &=\log\ell_{31}-\log\ell_{12}+\frac{1}{2}(\overline{u_3}-\overline{u_2}).
	\end{align*}
	Thus,
	\begin{align*}
	\lim_{n\to\infty}\frac{1}{2}(\beta_n-\gamma_n)(y_n-z_n)=0.
	\end{align*}
	It is left to determine the limit
	\begin{align*}
	\lim _{n\to\infty}\alpha_n(y_n+z_n) &=\lim _{n\to\infty}\alpha_n\log \ell_{31}^n + \lim _{n\to\infty}\alpha_n \log \ell_{12}^n.
	\end{align*}
	We recall that due to Proposition~\ref{prop:area_vertex_star_bounded}
	$ \lim _{n\to\infty}\alpha_n = 0,$ and that $ \lim _{n\to\infty}\log \ell_{31}^n = \lim _{n\to\infty}\log \ell_{12}^n = +\infty.$
	We apply the sine rule and the L'Hospital's rule to obtain the expression
	\begin{align*}
	\lim _{n\to\infty}\alpha_n\log \ell_{31}^n &= \lim _{n\to\infty}({\alpha_n}{\log \ell_{23}^n} + \alpha_n{\log \sin\beta_n} - \alpha_n\log\sin\alpha_n)\\
	&=-\lim_{n\to\infty}\alpha_n\log\sin\alpha_n = 0.
	\end{align*}
	Similarly, $\lim _{n\to\infty}\alpha_n\log \ell_{12}^n = 0.$\\\\
	Altogether, we see that
	$$\lim_{n\to\infty}h(u_{1,n},u_{2,n},u_{3,n}) =  -\pi \overline{x}.$$
\end{proof}

\begin{lemma2}\label{lemma:E_divergence}
	There exists a convergent sequence~$(D_n)_{n\in\N}$ of real numbers such that the function~$\E$ satisfies
	\begin{align*}
	\E(u_{n}) =D_{n} + 2\pi \left(u_{i^*, n} \chi(S) + \sum_{j\in V}(u_{j,n}-u_{i^*, n}) \right).
	\end{align*}
\end{lemma2}
\begin{proof}
	Due to the Euler formula, $ 2\abs{V} - \abs{F_\Delta} = 2\chi(S).$ Applying Lemma~\ref{lemma:h-scaling} we obtain the equality
	\begin{align*}
	\E(u_{n}) &=\sum_{ijl\in F_\Delta} h(u_{i,n},u_{j,n},u_{l,n})+ 2\pi \sum_{j\in V}u_{j,n}\\
	&=\underbrace{\sum_{ijl\in F_\Delta} h((u_{i,n},u_{j,n},u_{l,n})-u_{i^*, n}(1,1,1)) }_{=:D_{n}}- \pi \abs{F_\Delta}u_{i^*, n}+ 2\pi \sum_{j\in V}u_{j,n}\\
	&=D_{n} + 2\pi \left(u_{i^*, n} \chi(S) + \sum_{j\in V}(u_{j,n}-u_{i^*, n}) \right).
	\end{align*}
	
	The sequence~$(D_{n})_{n\in\N}$ converges due to Corollary~\ref{cor:two_vertices_converge} and Proposition~\ref{prop:behaviour_of_h}.
\end{proof}
\paragraph{Influence of~$\mathbf{(u_n)_{n\in\N}}$ on the area of a triangle}
\begin{lemma2}\label{lemma:triangulation_triangle_areas}
	Let~$ijk\in F_\Delta$ such that the sequences~$(u_{j,{n}}-u_{i^*,{n}})_{n\in\N}$ and ~$(u_{k,{n}}-u_{i^*,{n}})_{n\in\N}$ converge. Denote by~$A_{ijk}^n$ the area of the triangle with edge lengths~$\ell_{ij}^n,\ell_{jk}^n,\ell_{ki}^n.$
	
	\begin{enumerate}[a)]
		\item If the sequence~$(u_{i,{n}}-u_{i^*,{n}})_{n\in\N}$ converges, there exists a convergent sequence of real numbers~$(C_n)_{n\in\N}$, such that the area of the triangle with edge lengths~$\ell_{ij}^n,\ell_{jk}^n,\ell_{ki}^n$ satisfies
		$$\log A_{ijk}^n= C_n + 2 u_{i^*,{n}}.$$
		\item If the sequence~$(u_{i,{n}}-u_{i^*,{n}})_{n\in\N}$ diverges to~$+\infty$, there exists a convergent sequence of real numbers~$(C_n)_{n\in\N}$, such that the area of the triangle with edge lengths~$\ell_{ij}^n,\ell_{jk}^n,\ell_{ki}^n$ satisfies
		$$ \log A_{ijk}^n= C_n + \frac{1}{2} (u_{i,{n}}+3u_{i^*,{n}}).$$
	\end{enumerate}
\end{lemma2}

\begin{proof}
	
	\begin{figure}[h!]
		\labellist
		\small\hair 2pt
		\pinlabel {$i$} [ ] at 8 71
		\pinlabel {$j$} [ ] at 532 21
		\pinlabel {$l$} [ ] at 279 413
		\pinlabel {$\alpha_l^n$} [ ] at 260 325
		\pinlabel {$\ell_{ij}^n $} [ ] at 261 30
		\pinlabel {$\ell_{il}^n$} [ ] at 90 251
		\pinlabel {$\ell_{jl}^n$} [ ] at 420 220
		\endlabellist
		\centering
		\includegraphics[width=0.3\textwidth]{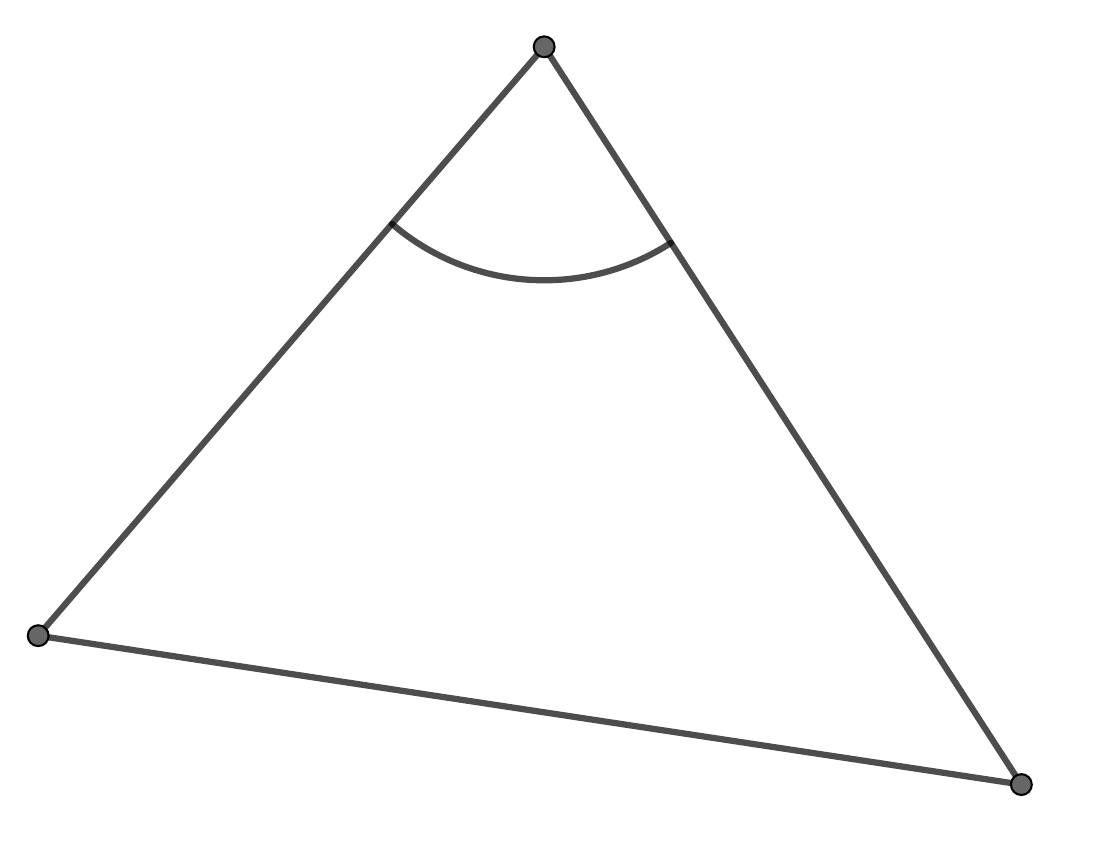}
		\caption{ }
		\label{fig:triangle_area_logarithm}
	\end{figure}
 The proof follows from the continuity of the area function, from Convention~\ref{convention} and from Corollary~\ref{cor:two_vertices_converge}. Indeed, let~$\alpha^n_l$ be the angle at vertex~$l$, as in Figure~\ref{fig:triangle_area_logarithm}. Then
 \begin{multline*}
 \log A^n_{ijl} = \underbrace{\log\left(\frac{1}{2}\ell_{il}\ell_{jl}\sin\alpha^n_l\right) + u_{l,{n}}-u_{i^*,{n}}+ \frac{1}{2}(u_{j,{n}}-u_{i^*,{n}})}_{(\ast)}\\+\frac{1}{2}(u_{i,{n}}-u_{i^*,{n}})+2u_{i^*,{n}},
 \end{multline*}
	where~$(\ast)$ converges due to the assumption and due to Proposition~\ref{prop:area_vertex_star_bounded}. If the sequence~$(u_{i,{n}}-u_{i^*,{n}})_{n\in\N}$ converges,
	define~$C_n = (\ast) + \frac{1}{2}(u_{i,{n}}-u_{i^*,{n}})$. If the sequence~$(u_{i,{n}}-u_{i^*,{n}})_{n\in\N}$ diverges to~$+\infty$, define~$C_n = (\ast)$. In both cases the sequence~$(C_n)_{n\in\N}$ converges, and the result follows.
\end{proof}

\subsection{Proofs of Theorem~\ref{thm:divergence_hyperbolic_case} and Theorem~\ref{thm:divergence_spherical_case}}\label{sec:existence_proofs_thms}

\begin{theorem*}[Theorem~\ref{thm:divergence_hyperbolic_case}]
	Let $\chi(S)<0$ and let $(u_n)_{n\in \N}$ be an unbounded sequence in $\mathcal{A}_-$. Then there exists a subsequence $(u_{n_k})_{k\in\N}$ of $(u_n)_{n\in \N}$, such that
	$$\lim_{k\to\infty}\E(u_{n_k}) = +\infty.$$
\end{theorem*}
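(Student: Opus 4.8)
The plan is to pass, without loss of generality, to a subsequence on which Convention~\ref{convention} holds while preserving unboundedness -- this is possible because any unbounded sequence has a subsequence with $\norm{u_n}\to\infty$, which can be refined so that every coordinate converges in $[-\infty,+\infty]$ and so that the minimizing vertex $i^*$ is fixed -- and then to read off the asymptotics of $\E$ directly from Lemma~\ref{lemma:E_divergence}. Writing $\delta_{j,n} := u_{j,n} - u_{i^*,n}\geq 0$ and $D_n := \sum_{j\in V}\delta_{j,n}$, that lemma gives
$$\E(u_n) = C_n + 2\pi\big(\chi(S)\,u_{i^*,n} + D_n\big),$$
with $(C_n)_{n\in\N}$ convergent and $D_n\geq 0$. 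Since $\chi(S)<0$, the only term that could obstruct divergence to $+\infty$ is $\chi(S)\,u_{i^*,n}$, and only in the event $u_{i^*,n}\to+\infty$. The entire argument therefore hinges on showing that the constraint $A_{tot}(u_n)\leq 1$ defining $\mathcal{A}_-$ rules this out.

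My first step would be to prove that $u_{i^*,n}$ is \emph{bounded above}. By Corollary~\ref{cor:two_vertices_converge}, every triangle $T\in F_\Delta$ has at least two vertices whose $\delta$-sequences converge, so Lemma~\ref{lemma:triangulation_triangle_areas} applies to it: in case a) its area satisfies $\log A_T^n = C_n^T + 2u_{i^*,n}$, while in case b) $\log A_T^n = C_n^T + 2u_{i^*,n} + \tfrac12\delta_{i,n}$. As $\delta_{i,n}\geq 0$ and $(C_n^T)$ converges, in both cases $A_T^n \geq e^{C_n^T}e^{2u_{i^*,n}}$. Fixing any single triangle $T_0$ and using $A_{T_0}^n \leq A_{tot}(u_n)\leq 1$ then yields $2u_{i^*,n}\leq -C_n^{T_0}$, a bound uniform in $n$. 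By the trichotomy in Convention~\ref{convention}, $u_{i^*,n}$ must consequently either converge or diverge properly to $-\infty$.

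It then remains to dispatch these two cases. If $u_{i^*,n}\to-\infty$, then $\chi(S)\,u_{i^*,n}\to+\infty$ precisely because $\chi(S)<0$, and since $D_n\geq 0$ and $C_n$ is bounded, $\E(u_n)\to+\infty$. If instead $u_{i^*,n}$ converges, unboundedness of the refined sequence is the decisive input: every coordinate satisfies $u_{j,n}=u_{i^*,n}+\delta_{j,n}\geq u_{i^*,n}$ and is hence bounded below, so unboundedness forces some coordinate to diverge to $+\infty$, i.e. some $\delta_{j,n}\to+\infty$; then $D_n\to+\infty$ and again $\E(u_n)\to+\infty$.

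I expect the main obstacle to be the uniform upper bound on $u_{i^*,n}$, as this is the one place where the defining inequality of $\mathcal{A}_-$ is used. Its proof rests on the observation that, regardless of whether a triangle falls under case a) or case b) of Lemma~\ref{lemma:triangulation_triangle_areas}, the nonnegativity of the correction $\tfrac12\delta_{i,n}$ bounds its area below by $e^{C_n^T}e^{2u_{i^*,n}}$; once this is established, the sign of $\chi(S)$ and the nonnegativity of $D_n$ finish the argument with no further appeal to the finer area asymptotics.
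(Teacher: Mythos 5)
Your proof is correct and follows essentially the same route as the paper's: pass to a subsequence satisfying Convention~\ref{convention}, expand $\E$ via Lemma~\ref{lemma:E_divergence}, and use the constraint $A_{tot}\leq 1$ together with Lemma~\ref{lemma:triangulation_triangle_areas} to control $u_{i^*,n}$. The only (harmless) difference is that the paper concludes $u_{i^*,n}\to-\infty$ outright, whereas you establish only an upper bound on $u_{i^*,n}$ and then absorb the convergent case into the divergence of $\sum_{j\in V}(u_{j,n}-u_{i^*,n})$.
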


\begin{proof}
	We assume that the sequence~$(u_{n})_{n\in\N}$ satisfies Convention~\ref{convention}. Due to Lemma~\ref{lemma:E_divergence} there exists a convergent sequence~$(C_n)_{n\in\N}$ such that
	\begin{align*}
	\E(u_{n}) =C_{n} + 2\pi \left(u_{i^*, n} \chi(S) + \sum_{j\in V}(u_{j,n}-u_{i^*, n}) \right).
	\end{align*}
	The sequence
	$$\left(\sum_{j\in V}(u_{j,n}-u_{i^*, n})\right)_{n\in\N}$$ is bounded from below by zero due to Convention~\ref{convention}.
	\\\\
	Since the sequence~$(u_{n})_{n\in\N}$ lies in~$\mathcal{A}_-$, the area of each triangle is bounded from above. At the same time~$(u_{n})_{n\in\N}$ is unbounded. We apply Lemma~\ref{lemma:triangulation_triangle_areas} to conclude that the sequence~$(u_{i^*,n})_{n\in\N}$ diverges properly to~$-\infty$.
	
	Indeed, if~$(u_{i^*,n})_{n\in\N}$ would diverge properly to~$+\infty$, any of the two cases of Lemma~\ref{lemma:triangulation_triangle_areas} would yield a contradiction to the bound on the area of any triangle. Assume that~$(u_{i^*,n})_{n\in\N}$ converges. If all triangles satisfy the condition of case $a)$ of Lemma~\ref{lemma:triangulation_triangle_areas} then all sequences~$(u_{i,n})_{n\in\N}$ converge --- a contradiction to the fact that~$(u_{n})_{n\in\N}$ is unbounded. Thus there must be one sequence~$(u_{i,n})_{n\in\N}$ such that~$(u_{i,{n}}-u_{i^*,{n}})_{n\in\N}$ diverges to~$+\infty$. This in turn implies that~$(u_{i,n})_{n\in\N}$ itself diverges to~$+\infty$. Applying case~$b)$ of Lemma~\ref{lemma:triangulation_triangle_areas} yields the contradiction to the upper bound on the area of any triangle with vertex~$i$.
	
	Thus, the sequence~$(u_{i^*,n})_{n\in\N}$ must diverge properly to~$-\infty$, and
	$$\lim_{n\to\infty}\E(u_{n}) = +\infty.$$
\end{proof}
\begin{theorem*}[Theorem~\ref{thm:divergence_spherical_case}]
	Let~$\chi(S)=2$ and let~$(u_n)_{n\in \N}$ be an unbounded sequence in~$\mathcal{A}_+$. Then there exists a subsequence $(u_{n_k})_{k\in\N}$ of~$(u_n)_{n\in \N}$, such that
	$$\lim_{k\to\infty}\E(u_{n_k}) = +\infty.$$
\end{theorem*}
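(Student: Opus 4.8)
The plan is to mirror the structure of the proof of Theorem~\ref{thm:divergence_hyperbolic_case}, but to replace the sign argument—which no longer works because now $\chi(S)=2>0$—by a quantitative use of the constraint $A_{tot}\ge 1$ defining $\mathcal{A}_+$. As there, I would first invoke Convention~\ref{convention} to pass to a subsequence of $(u_n)_{n\in\N}$ with the listed properties, and introduce the abbreviations
\begin{align*}
d_{j,n}:=u_{j,n}-u_{i^*,n}\ge 0,\qquad D_n:=\max_{j\in V}d_{j,n},\qquad S_n:=\sum_{j\in V}d_{j,n},
\end{align*}
so that $0\le D_n\le S_n$, and by Convention~\ref{convention} each $d_{j,n}$, and hence $D_n$, either converges or diverges properly to $+\infty$. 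By Lemma~\ref{lemma:E_divergence} and $\chi(S)=2$ there is a convergent sequence $(C_n)_{n\in\N}$ with $\E(u_n)=C_n+2\pi(2u_{i^*,n}+S_n)$, so it suffices to show that $2u_{i^*,n}+S_n\to+\infty$.

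The key step is to extract a lower bound on $u_{i^*,n}$ from the area constraint. By Corollary~\ref{cor:two_vertices_converge}, in every triangle $ijk\in F_\Delta$ at least two of the differences $d_{i,n},d_{j,n},d_{k,n}$ converge, so each triangle falls under case a) or case b) of Lemma~\ref{lemma:triangulation_triangle_areas}. In case a) one has $\log A_{ijk}^n=C_n'+2u_{i^*,n}$, while in case b) with diverging apex $i$ one has $\log A_{ijk}^n=C_n'+2u_{i^*,n}+\tfrac12 d_{i,n}$; since $0\le d_{i,n}\le D_n$, in both cases $\log A_{ijk}^n\le 2u_{i^*,n}+\tfrac12 D_n+O(1)$. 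Summing the finitely many triangle areas gives $\log A_{tot}(u_n)\le 2u_{i^*,n}+\tfrac12 D_n+O(1)$, and since $u_n\in\mathcal{A}_+$ forces $\log A_{tot}(u_n)\ge 0$, I obtain the crucial inequality $2u_{i^*,n}\ge -\tfrac12 D_n-O(1)$.

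Combining this with $S_n\ge D_n$ yields $2u_{i^*,n}+S_n\ge \tfrac12 D_n-O(1)$. Then I would split into two cases. If $D_n\to+\infty$, the right-hand side diverges to $+\infty$ and we are done. If $D_n$ stays bounded, then all $d_{j,n}$ converge and $S_n$ is bounded; since $(u_n)_{n\in\N}$ is unbounded and $u_{j,n}=u_{i^*,n}+d_{j,n}$, the sequence $(u_{i^*,n})_{n\in\N}$ must diverge, and the upper bound $\log A_{tot}(u_n)\le 2u_{i^*,n}+O(1)$ rules out $u_{i^*,n}\to-\infty$ (it would force $A_{tot}\to 0<1$), so $u_{i^*,n}\to+\infty$ and again $2u_{i^*,n}+S_n\to+\infty$. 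In either case $\E(u_n)\to+\infty$.

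The main obstacle is precisely that, unlike in the hyperbolic case, $u_{i^*,n}$ is allowed to tend to $-\infty$ inside $\mathcal{A}_+$ (this happens exactly when $2u_{i^*,n}\approx-\tfrac12 D_n$ while the total area stays near $1$). The proof therefore hinges on the observation that the area lower bound only allows $u_{i^*,n}$ to be as negative as $-\tfrac14 D_n$, whereas the ``spreading'' term $S_n$ grows at least like $D_n$; the surplus $\tfrac12 D_n$ is what drives $\E$ to $+\infty$. Making the $O(1)$ bookkeeping precise—ensuring that the convergent constants supplied by Lemmata~\ref{lemma:E_divergence} and~\ref{lemma:triangulation_triangle_areas} are genuinely bounded along the subsequence—is the only delicate point, and it is routine.
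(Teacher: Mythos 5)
Your proof is correct and takes essentially the same route as the paper's: it rests on Convention~\ref{convention}, Lemma~\ref{lemma:E_divergence}, Lemma~\ref{lemma:triangulation_triangle_areas}, and the constraint $A_{tot}\geq 1$, which are exactly the ingredients used there. The only difference is organizational: the paper splits into three cases according to the behaviour of $(u_{i^*,n})_{n\in\N}$ and, in the case $u_{i^*,n}\to-\infty$, extracts a single triangle of non-vanishing area whose apex $i$ yields a lower bound on $u_{i,n}+3u_{i^*,n}$ --- the same information as your uniform inequality $2u_{i^*,n}\geq -\tfrac{1}{2}D_n-O(1)$.
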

\begin{proof}
	We assume that the sequence~$(u_{n})_{n\in\N}$ satisfies Convention~\ref{convention}. Due to Lemma~\ref{lemma:E_divergence} there exists a convergent sequence~$(C_n)_{n\in\N}$ such that
	\begin{align*}
	\E(u_{n}) =C_{n} + 2\pi \left( 2 u_{i^*, n} + \sum_{j\in V}(u_{j,n}-u_{i^*, n}) \right).
	\end{align*}
	The sequence
	$$\left(\sum_{j\in V}(u_{j,n}-u_{i^*, n})\right)_{n\in\N}$$ is bounded from below by zero due to Convention~\ref{convention}. We distinguish three cases.\\\\
	\textbf{Case 1: The sequence}~$\mathbf{(u_{i^*, n})_{n\in\N}}$ \textbf{diverges properly to}~$\mathbf{+\infty}.$\\
	It follows immediately that
	$$\lim_{n\to\infty}\E(u_{n}) = +\infty.$$
	\textbf{Case 2: The sequence}~$\mathbf{(u_{i^*, n})_{n\in\N}}$ \textbf{converges.}\\
	Since the sequence~$(u_n)_{n\in \N}$ is unbounded, there exists a vertex~$j\in V$ with $\lim_{n\to\infty}(u_{j,n}-u_{i^*, n})=+\infty$. Thus,
	$$\lim_{n\to\infty}\E(u_{n}) = +\infty.$$
	\textbf{Case 3: The sequence}~$\mathbf{(u_{i^*, n})_{n\in\N}}$ \textbf{diverges properly to}~$\mathbf{-\infty}.$\\
	There exists a vertex~$i\in V$, such that the sequence~$(u_{i,{n}}+3u_{i^*,{n}})_{n\in\N}$ is bounded from below. This is due to the fact that the sequence~$(u_{n})_{n\in\N}$ lies in~$\mathcal{A}_+$, and thus there exists a triangle whose area is non-zero in the limit. The lower bound then follows from Lemma~\ref{lemma:triangulation_triangle_areas} case~$b)$. We obtain
	\begin{align*}
	2u_{i^*, n} + \sum_{j\in V}(u_{j,n}-u_{i^*, n}) &= -2u_{i^*, n}+ (u_{i,n}+3u_{i^*, n}) + \sum_{j\in V, j\neq i}(u_{j,n}-u_{i^*, n}).
	\end{align*}
	Since both sequences
	$$\left(\sum_{j\in V, j\neq i}(u_{j,n}-u_{i^*, n})\right)_{n\in\N}\qquad\text{and}\qquad(u_{i, n}+ 3u_{i^*, n} )_{n\in\N}$$
	are bounded from below, and the sequence~$(-2u_{i^*, n})_{n\in\N}$ diverges properly to~$+\infty$, 
	$$\lim_{n\to\infty}\E(u_{n}) = +\infty.$$	
\end{proof}

\begin{acknowledgements}
	I want to thank prof. Boris Springborn for his support during the writing of this article. 
\end{acknowledgements}
\bibliographystyle{plain}
\bibliography{refs2}

%
%


\end{document}